\newtheorem{theorem}{Theorem}
\theoremstyle{plain}
\newtheorem{definition}{Definition}
\newtheorem{example}{Example}
\newtheorem{proposition}{Proposition}
\newtheorem{remark}{Remark}
\numberwithin{equation}{section}
\begin{document}
\title[Some Fixed-Circle Results in $C^{\ast }-$Algebra Valued Metric Spaces]%
{Some Fixed-Circle Results in $C^{\ast }-$Algebra Valued Metric Spaces}
\author{Nilay DE\u{G}\.{I}RMEN}
\address[N. De\u{g}irmen]{Ondokuz Mayis University Faculty of Art and
Sciences Deparment of Mathematics Samsun, Turkey}
\email[N. De\u{g}irmen]{nilay.sager@omu.edu.tr}
\subjclass[2010]{ 47H10, 54H25, 46L07, 37E10.}
\keywords{$C^{\ast }-$algebra, $C^{\ast }-$algebra valued metric space,
fixed circle, the existence theorem, the uniqueness theorem.}

\begin{abstract}
In this paper, we consider fixed-circle problem in $C^{\ast }-$algebra
valued metric spaces and prove some fixed-circle theorems for self-mappings
by defining the notion of fixed-circle on such spaces with geometric
interpretation. Furthermore, we give some illustrative examples to
substantiate the importance of our newly obtained results.
\end{abstract}

\maketitle

\section{Introduction and Preliminaries}

The Banach contraction principle \cite{1} is a popular and effective tool
used for the existence and uniqueness of solutions of many nonlinear
problems arising in physics and engineering sciences. Up to now, researchers
generalized the Banach contraction principle in many directions and obtained
new results in different metric spaces. With this idea in mind, in 2014, Ma 
\textit{et al.} \cite{2} introduced the concept of $C^{\ast }-$algebra
valued metric space and established some fixed point theorems for
self-mappings satisfying the contractive conditions on such spaces. Also, in
2015, Batul and Kamran \cite{3} generalized it by weakening the contractive
condition introduced by Ma \textit{et al}. \cite{2}. Going in the same
direction, many articles on fixed point results in $C^{\ast }-$algebra
valued metric spaces, we refer to \cite{4,5,6,7,8,9}.

On the other hand, as a geometric approach to the fixed point theory, in 
\cite{10}, \"{O}zg\"{u}r and Ta\c{s} initiated the investigations concerning
a fixed-circle problem in metric spaces and examined some fixed-circle
theorems for self-mappings on metric spaces with geometric interpretation by
giving some necessary examples to validate their own findings. Since this
subject has been developed very fast in recent times due to theoretical
mathematical studies and some applications in different fields of
mathematical sciences such as neural networks, it has attracted considerable
interest from many authors. New solutions of fixed-circle problem was
investigated with various aspects and new contractive conditions on both
metric spaces and some generalized metric spaces. For fixed-circle results
using different techniques, we recommend \cite{11,12,13,14,15,16}.

In the most existing literature, it has been observed that fixed circles
have not been defined in $C^{\ast }-$algebra valued metric spaces. This
motivates us to analyze solutions of the fixed-circle problem in such spaces
with geometric interpretation. So, our purpose in this study is to define
the notion of a fixed circle on a $C^{\ast }-$algebra valued metric space
and obtain some fixed-circle theorems for self-mappings on $C^{\ast }-$%
algebra valued metric spaces. Also, we construct some nontrivial
illustrative examples of mappings which have or not fixed circles. Since the
theory of $C^{\ast }-$algebra is one of the most extensive research areas in
operator theory and functional analysis, which is extremely active and
having huge applications to theoretical physics and noncommutative geometry,
we believe that the results of this article will contribute to studying on
fixed-circles with different aspects for $C^{\ast }-$algebra valued metric
spaces in the future.

Now, we summarize a number of known definitions and results about $C^{\ast
}- $algebras and $C^{\ast }-$algebra valued metric spaces , which will be
needed in our subsequent discussions.

We start with the definition of a $C^{\ast }-$algebra and its some
properties used in this research.

\begin{definition}
\cite{17} A mapping $x\rightarrow x^{\ast }$ of a complex algebra $\mathbb{A}
$ into $\mathbb{A}$ is called an involution on $\mathbb{A}$ if the following
properties hold for all $x,y\in \mathbb{A}$ and $\lambda \in \mathbb{C}:$
\end{definition}

\textit{(i) }$\left( x^{\ast }\right) ^{\ast }=x,$

\textit{(ii) }$\left( xy\right) ^{\ast }=y^{\ast }x^{\ast },$

\textit{(iii) }$\left( \lambda x+y\right) ^{\ast }=\overline{\lambda }%
x^{\ast }+y^{\ast }.$

\textit{A complex Banach algebra }$A$\textit{\ with an involution such that
for every }$x$\textit{\ in }$A$%
\begin{equation*}
\left\Vert x^{\ast }x\right\Vert =\left\Vert x\right\Vert ^{2}
\end{equation*}%
\textit{is called a }$C^{\ast }-$\textit{algebra.}

In the rest of paper, $\mathbb{A}$ will denote a unital $C^{\ast }-$algebra
with a unit $I.$

Let $\mathbb{A}_{h}=\left\{ x\in \mathbb{A}:x=x^{\ast }\right\} .$ An
element $x\in \mathbb{A}$ is called positive if $x\in \mathbb{A}_{h}$ and $%
\sigma \left( x\right) \subset 
\mathbb{R}
^{+}$ where $\sigma \left( x\right) =\left\{ \lambda \in 
\mathbb{R}
:x-\lambda I\text{ is non-invertible}\right\} ,$ the spectrum of $x$ in $%
\mathbb{A}$. If $x\in \mathbb{A}$ is positive, we write it as $\theta
\preceq x,$ where $\theta $ is the zero element in $\mathbb{A}.$ We denote
the set of all positive elements of $\mathbb{A}$ by $\mathbb{A}_{+}.$ Also, $%
\mathbb{A}_{h}$ becomes a partially ordered set by defining $x\preceq y$ to
mean $y-x\in \mathbb{A}_{+}$ \cite{17}$.$

The following statements about involution on $\mathbb{A}$, positive elements
of $\mathbb{A}$ and partial order $\preceq $ on $\mathbb{A}_{h}$ are true:

(i) $\left\Vert x^{\ast }\right\Vert =\left\Vert x\right\Vert $ for all $%
x\in \mathbb{A}$.

(ii) If $x\in \mathbb{A}$ is invertible, then $x^{\ast }$ is invertible and $%
\left( x^{\ast }\right) ^{-1}=\left( x^{-1}\right) ^{\ast }.$

(iii) If $x,y,z\in \mathbb{A}_{h}$, then $x\preceq y$ implies $x+z\preceq
y+z.$

(iv) If $x,y\in \mathbb{A}_{+}$ and $\alpha ,\beta \in 
\mathbb{R}
^{+}\cup \left\{ 0\right\} ,$ then $\alpha x+\beta y\in $ $\mathbb{A}_{+}.$

(v) $\mathbb{A}_{+}=\left\{ x^{\ast }x:x\in \mathbb{A}\right\} $ .

(vi) If $x,y\in \mathbb{A}_{h}$ and $z\in \mathbb{A}$, then $x\preceq y$
implies $z^{\ast }xz\preceq z^{\ast }yz.$

(vii) If $\theta \preceq x\preceq y,$ then $\left\Vert x\right\Vert \leq
\left\Vert y\right\Vert $ \cite{17}$.$

Using the concept of a positive element in a $C^{\ast }-$algebra, in 2014,
Ma and Jiang \cite{2} introduced the notion of a $C^{\ast }-$algebra valued
metric space in the following way:

\begin{definition}
\cite{2} Let $X$ be a nonempty set. Suppose the mapping $d:X\times
X\rightarrow \mathbb{A}$ satisfies:
\end{definition}

\textit{(i) }$\theta \preceq d\left( x,y\right) $\textit{\ for all }$x,y\in
X $\textit{\ and }$d\left( x,y\right) =\theta \Leftrightarrow x=y;$

\textit{(ii) }$d\left( x,y\right) =d\left( y,x\right) $\textit{\ for all }$%
x,y\in X;$

\textit{(iii) }$d\left( x,y\right) \preceq d\left( x,z\right) +d\left(
z,y\right) $\textit{\ for all }$x,y,z\in X.$

\textit{Then, }$d$\textit{\ is called }$C^{\ast }-$\textit{algebra valued
metric on }$X$\textit{\ and }$\left( X,\mathbb{A},d\right) $\textit{\ is
called a }$C^{\ast }-$\textit{algebra valued metric space.}

It is clear that such spaces generalize the concept of metric spaces. The
main idea consists in using the set of all positive elements of a unital $%
C^{\ast }-$algebra instead of the set of real numbers.

\begin{example}
\cite{2} Let $E$ be a Lebesgue measurable set and $L\left( L^{2}\left(
E\right) \right) $ denote the set of bounded linear operators on Hilbert
space $L^{2}\left( E\right) .$ Define $d:L^{\infty }\left( E\right) \times
L^{\infty }\left( E\right) \rightarrow L\left( L^{2}\left( E\right) \right) $
by%
\begin{equation*}
d\left( f,g\right) =\pi _{\left\vert f-g\right\vert }
\end{equation*}%
for all $f,g\in L^{\infty }\left( E\right) $, where $\pi _{h}:L^{2}\left(
E\right) \rightarrow L^{2}\left( E\right) $ is the multiplication operator
defined by%
\begin{equation*}
\pi _{h}\left( \varphi \right) =h\cdot \varphi
\end{equation*}%
for all $\varphi \in L^{2}\left( E\right) .$ Then, $d$ is a $C^{\ast }-$%
algebra valued metric and $\left( L^{\infty }\left( E\right) ,L\left(
L^{2}\left( E\right) \right) ,d\right) $ is a complete $C^{\ast }-$algebra
valued metric space.
\end{example}

\begin{example}
\cite{2} Define $d:%
\mathbb{R}
\times 
\mathbb{R}
\rightarrow M_{2}\left( 
\mathbb{R}
\right) $ by%
\begin{equation*}
d\left( x,y\right) =diag\left( \left\vert x-y\right\vert ,\alpha \left\vert
x-y\right\vert \right)
\end{equation*}%
for all $x,y\in 
\mathbb{R}
$, where $\alpha \geq 0$ is a constant, the norm on $M_{2}\left( 
\mathbb{R}
\right) $ is defined by%
\begin{equation*}
\left\Vert \left[ 
\begin{array}{cc}
a & b \\ 
c & d%
\end{array}%
\right] \right\Vert =\max \left\{ \left\vert a\right\vert ,\left\vert
b\right\vert ,\left\vert c\right\vert ,\left\vert d\right\vert \right\} ,
\end{equation*}%
and partial ordering on $M_{2}\left( 
\mathbb{R}
\right) $ is given by%
\begin{equation*}
\left[ 
\begin{array}{cc}
a_{1} & b_{1} \\ 
c_{1} & d_{1}%
\end{array}%
\right] \preceq \left[ 
\begin{array}{cc}
a_{2} & b_{2} \\ 
c_{2} & d_{2}%
\end{array}%
\right] \Longleftrightarrow a_{1}\leq a_{2},\text{ }b_{1}\leq b_{2},\text{ }%
c_{1}\leq c_{2}\text{ and }d_{1}\leq d_{2}.
\end{equation*}%
Then, $d$ is a $C^{\ast }-$algebra valued metric and $\left( 
\mathbb{R}
,M_{2}\left( 
\mathbb{R}
\right) ,d\right) $ is a $C^{\ast }-$algebra valued metric space.
\end{example}

\begin{example}
\cite{3} Let $X=\left[ -1,1\right] \times \left[ -1,1\right] .$ Define $%
d:X\times X\rightarrow 
\mathbb{R}
^{2}$ by%
\begin{equation*}
d\left( x,y\right) =\left( \left\vert x_{1}-y_{1}\right\vert ,\left\vert
x_{2}-y_{2}\right\vert \right)
\end{equation*}%
for all $x=\left( x_{1},x_{2}\right) ,y=\left( y_{1},y_{2}\right) \in X,$
where the norm on $%
\mathbb{R}
^{2}$ is defined by%
\begin{equation*}
\left\Vert \left( a,b\right) \right\Vert =\max \left\{ \left\vert
a\right\vert ,\left\vert b\right\vert \right\} ,
\end{equation*}%
and partial ordering on $%
\mathbb{R}
^{2}$ is given by%
\begin{equation*}
\left( a,b\right) \preceq \left( c,d\right) \Longleftrightarrow a\leq c\text{
and }b\leq d.
\end{equation*}%
Then, $d$ is a $C^{\ast }-$algebra valued metric and $\left( X,%
\mathbb{R}
^{2},d\right) $ is a $C^{\ast }-$algebra valued metric space.
\end{example}

\begin{example}
\cite{6} Define $d:%
\mathbb{R}
\times 
\mathbb{R}
\rightarrow M_{2}\left( 
\mathbb{R}
\right) $ by%
\begin{equation*}
d\left( x,y\right) =\left\{ \left[ 
\begin{array}{c}
\left[ 
\begin{array}{cc}
1 & 0 \\ 
0 & 1%
\end{array}%
\right] ,\text{ \ }x\neq y \\ 
0,\text{ \ \ \ \ \ \ \ \ \ }x=y%
\end{array}%
\right] \right.
\end{equation*}%
for all $x,y\in 
\mathbb{R}
$. Then, $d$ is a $C^{\ast }-$algebra valued metric and $\left( 
\mathbb{R}
,M_{2}\left( 
\mathbb{R}
\right) ,d\right) $ is a $C^{\ast }-$algebra valued metric space.
\end{example}

Based on the idea of the Banach contraction principle \cite{1} in classical
metric spaces, Ma and Jiang \cite{2} established the following main theorems
which implies the existence and uniqueness of fixed point on complete $%
C^{\ast }-$algebra valued metric spaces.

\begin{theorem}
\cite{2} If $\left( X,\mathbb{A},d\right) $ is a complete $C^{\ast }-$%
algebra valued metric space and $T:X\rightarrow X$ is a $C^{\ast }-$algebra
valued contractive mapping on $X$, that is, there exists an $A\in \mathbb{A}$
with $\left\Vert A\right\Vert <1$ such that%
\begin{equation}
d\left( Tx,Ty\right) \preceq A^{\ast }d\left( x,y\right) A
\end{equation}%
for all $x,y\in X,$ then, there exists a unique fixed point in $X.$
\end{theorem}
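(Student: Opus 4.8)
The plan is to follow the classical Picard iteration argument, adapted so that every step respects the partial order $\preceq$ and is then transferred to scalar norm estimates via the listed properties (i)--(vii). Fix an arbitrary $x_0\in X$ and define the orbit $x_{n+1}=Tx_n=T^{n+1}x_0$. First I would establish, by induction on $n$ using the contractive hypothesis together with property (vi) (which lets me pass from $u\preceq v$ to $z^{\ast}uz\preceq z^{\ast}vz$), the comparison
\begin{equation*}
d\left( x_{n+1},x_n\right) \preceq \left( A^{\ast }\right) ^{n}d\left( x_1,x_0\right) A^{n}.
\end{equation*}
Since $d(x_1,x_0)\succeq \theta$, the right-hand side has the form $B^{\ast }d(x_1,x_0)B$ and is therefore positive, so property (vii) together with $\left\Vert A^{\ast }\right\Vert =\left\Vert A\right\Vert$ (property (i)) and submultiplicativity of the Banach-algebra norm yields the scalar bound $\left\Vert d(x_{n+1},x_n)\right\Vert \leq \left\Vert A\right\Vert ^{2n}\left\Vert d(x_1,x_0)\right\Vert$.

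Next I would show that $\left\{ x_n\right\}$ is a Cauchy sequence. Applying axiom (iii) of the metric repeatedly, together with property (iii) of the partial order, gives for $m>n$
\begin{equation*}
d\left( x_m,x_n\right) \preceq \sum_{k=n}^{m-1}d\left( x_{k+1},x_k\right),
\end{equation*}
and taking norms (using property (iv) to see the sum is positive, then property (vii)) collapses this to the geometric tail $\sum_{k=n}^{m-1}\left\Vert A\right\Vert ^{2k}\left\Vert d(x_1,x_0)\right\Vert$. Because $\left\Vert A\right\Vert <1$ forces $\left\Vert A\right\Vert ^{2}<1$, this tail tends to $0$, so $\left\Vert d(x_m,x_n)\right\Vert \rightarrow 0$. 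Completeness of $\left( X,\mathbb{A},d\right)$ then produces a limit $x^{\ast }\in X$ with $\left\Vert d(x_n,x^{\ast })\right\Vert \rightarrow 0$.

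To verify that $x^{\ast }$ is fixed, I would estimate $d(Tx^{\ast },x^{\ast })\preceq d(Tx^{\ast },Tx_n)+d(x_{n+1},x^{\ast })$ and pass to norms: the first term is at most $\left\Vert A\right\Vert ^{2}\left\Vert d(x^{\ast },x_n)\right\Vert$ by the contractive condition, and both terms vanish as $n\rightarrow \infty$, whence $\left\Vert d(Tx^{\ast },x^{\ast })\right\Vert =0$ and $Tx^{\ast }=x^{\ast }$ by axiom (i) of the metric. For uniqueness, if $y^{\ast }$ is another fixed point then $d(x^{\ast },y^{\ast })=d(Tx^{\ast },Ty^{\ast })\preceq A^{\ast }d(x^{\ast },y^{\ast })A$ gives $\left\Vert d(x^{\ast },y^{\ast })\right\Vert \leq \left\Vert A\right\Vert ^{2}\left\Vert d(x^{\ast },y^{\ast })\right\Vert$, and since $\left\Vert A\right\Vert ^{2}<1$ this forces $\left\Vert d(x^{\ast },y^{\ast })\right\Vert =0$, i.e. $x^{\ast }=y^{\ast }$.

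The routine parts are the inductive comparison and the geometric summation; the step requiring the most care is the transition from the operator-valued inequalities to scalar estimates, where I must ensure each comparison is between positive elements so that property (vii) applies. In particular, this means checking that $\left( A^{\ast }\right) ^{n}d(x_1,x_0)A^{n}$ is positive (which holds because $d(x_1,x_0)\succeq \theta$ and property (vi) preserves positivity) and that the partial sums arising from the triangle inequality remain correctly ordered under property (iii). This is essentially the only place where the $C^{\ast }$-structure, rather than a generic ordered-vector-space structure, is genuinely exploited.
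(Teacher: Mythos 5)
Your proposal is correct, and it is essentially the classical Picard-iteration argument with which Ma, Jiang and Sun prove this result in \cite{2}; the paper itself only quotes the theorem from that reference and gives no proof, so there is nothing in the present text to diverge from. Each of your order-to-norm transitions is legitimately backed by properties (i), (vi) and (vii) as you indicate, and the Cauchy, fixed-point and uniqueness steps all go through as written.
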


In the following theorem and Theorem 12, we denote the set $\left\{ a\in 
\mathbb{A}:ab=ba\text{ for all }b\in \mathbb{A}\right\} $ by $\mathbb{A}%
^{\prime }.$

\begin{theorem}
\cite{2} Let $\left( X,\mathbb{A},d\right) $ be a complete $C^{\ast }-$%
algebra valued metric space. Suppose the mapping $T:X\rightarrow X$
satisfies for all $x,y\in X$ such that%
\begin{equation}
d\left( Tx,Ty\right) \preceq A\left( d\left( Tx,y\right) +d\left(
Ty,x\right) \right)
\end{equation}%
where $A\in \mathbb{A}_{+}^{\prime }$ and $\left\Vert A\right\Vert <\frac{1}{%
2}.$ Then, there exists a unique fixed point in $X.$
\end{theorem}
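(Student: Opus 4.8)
The plan is to run the standard Picard iteration and to reduce the operator inequality (1.2) to a scalar contraction at the level of norms, after which convergence, fixedness, and uniqueness are handled in turn. Fix an arbitrary $x_{0}\in X$ and set $x_{n+1}=Tx_{n}$. Substituting $x=x_{n-1}$ and $y=x_{n}$ into (1.2) and using the cancellation $d(x_{n},x_{n})=\theta $ from axiom (i), I obtain
\[
d(x_{n},x_{n+1})\preceq A(d(x_{n},x_{n})+d(x_{n+1},x_{n-1}))=A\,d(x_{n+1},x_{n-1}),
\]
which is the engine of the whole argument.

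The key step is to pass from this operator inequality to a numerical recursion. Applying property (vii) together with submultiplicativity of the $C^{\ast }$-norm, and then the triangle inequality (iii) followed again by (vii), I get
\[
\Vert d(x_{n},x_{n+1})\Vert \le \Vert A\Vert \,\Vert d(x_{n+1},x_{n-1})\Vert \le \Vert A\Vert (\Vert d(x_{n},x_{n+1})\Vert +\Vert d(x_{n-1},x_{n})\Vert ).
\]
Writing $a_{n}=\Vert d(x_{n},x_{n+1})\Vert $ and solving $a_{n}\le \Vert A\Vert (a_{n}+a_{n-1})$ yields $a_{n}\le h\,a_{n-1}$ with $h=\Vert A\Vert /(1-\Vert A\Vert )$. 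This is exactly where the hypothesis $\Vert A\Vert <\tfrac{1}{2}$ earns its keep: it forces $h<1$. Iterating gives $a_{n}\le h^{n}a_{0}$, and then for $m>n$ the triangle inequality plus (vii) give $\Vert d(x_{n},x_{m})\Vert \le \sum_{k=n}^{m-1}a_{k}\le a_{0}\,h^{n}/(1-h)$, so $\{x_{n}\}$ is Cauchy and completeness produces a limit $x^{\ast }\in X$.

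To see $Tx^{\ast }=x^{\ast }$, I substitute $x=x^{\ast }$, $y=x_{n}$ into (1.2) and run the same norm estimates: via (vii), the triangle inequality, and the already-established convergences $\Vert d(x_{n},x^{\ast })\Vert \to 0$ and $\Vert d(x_{n+1},x^{\ast })\Vert \to 0$, I bound $(1-\Vert A\Vert )\Vert d(Tx^{\ast },x^{\ast })\Vert $ above by a null sequence, whence $d(Tx^{\ast },x^{\ast })=\theta $ and $Tx^{\ast }=x^{\ast }$. For uniqueness, if $x^{\ast }$ and $y^{\ast }$ are both fixed, putting $x=x^{\ast }$, $y=y^{\ast }$ in (1.2) and using symmetry (ii) gives $d(x^{\ast },y^{\ast })\preceq 2A\,d(x^{\ast },y^{\ast })$; taking norms via (vii) yields $(1-2\Vert A\Vert )\Vert d(x^{\ast },y^{\ast })\Vert \le 0$, and since $2\Vert A\Vert <1$ I conclude $d(x^{\ast },y^{\ast })=\theta $, i.e. $x^{\ast }=y^{\ast }$.

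I expect the main obstacle to be the careful bookkeeping in translating the operator order $\preceq $ into scalar norm inequalities at each stage: every application of (vii) requires the element in question to sit between $\theta $ and a genuine positive upper bound, and these manipulations are legitimate precisely because the positivity and centrality of $A$ (the hypothesis $A\in \mathbb{A}_{+}^{\prime }$) keep products such as $A\,d(\cdot ,\cdot )$ inside $\mathbb{A}_{+}$ and preserve the order when $A$ multiplies an inequality. A secondary subtlety is that, unlike the classical Kannan theorem, one cannot simply divide by $1-\Vert A\Vert $ or $1-2\Vert A\Vert $ at the operator level; taking norms first, so that one works with real numbers, is the device that avoids having to invert $I-A$ or $I-2A$ inside $\mathbb{A}$.
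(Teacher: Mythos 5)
The paper itself offers no proof of this statement: it is quoted verbatim from Ma, Jiang and Sun \cite{2} as background (Theorem 2), so there is no in-paper argument to compare against. Judged on its own, your proof is correct and complete. It does, however, take a genuinely different route from the proof in the original source: there the authors stay at the operator level, using positivity of $A$ and $\left\Vert A\right\Vert <\frac{1}{2}$ to invert $I-A$ and to check (via the coincidence of norm and spectral radius for positive elements) that $B=\left( I-A\right) ^{-1}A$ has norm less than one before telescoping; you instead pass to scalar norms immediately through property (vii) and submultiplicativity, arriving at the numerical recursion $a_{n}\leq \left\Vert A\right\Vert \left( a_{n}+a_{n-1}\right) $ and hence $a_{n}\leq ha_{n-1}$ with $h=\left\Vert A\right\Vert /\left( 1-\left\Vert A\right\Vert \right) <1$. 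Your route is more elementary --- no functional calculus, no invertibility of $I-A$ --- at the cost of discarding the operator-valued information earlier, which costs nothing for this theorem. The two delicate points are both handled correctly: centrality and positivity of $A$ make products such as $A\,d\left( \cdot ,\cdot \right) $ positive, so each application of (vii) is legitimate, and the hypothesis $\left\Vert A\right\Vert <\frac{1}{2}$ is invoked exactly where it is needed, once to force $h<1$ and once in the uniqueness step to make $1-2\left\Vert A\right\Vert >0$. The only cosmetic caveat is that the fixed-point step is stated as a sketch ("run the same norm estimates"); written out, it does close as claimed, yielding $\left( 1-\left\Vert A\right\Vert \right) \left\Vert d\left( Tx^{\ast },x^{\ast }\right) \right\Vert $ bounded by a null sequence.
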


Inspired by preceding observation, many authors stated new types of
contractive mappings and studied fixed point theorems. Shehwar et al. \cite%
{4} gave the extension of Caristi's fixed point theorem \cite{18,19} for
self-mappings defined on $C^{\ast }-$algebra valued metric spaces, which
guarantees the existence of fixed point as follows:

\begin{theorem}
\cite{4} Let $\left( X,\mathbb{A},d\right) $ be a complete $C^{\ast }-$%
algebra valued metric space, $\phi :X\rightarrow \mathbb{A}_{+}$ be a lower
semi continuous map and $T:X\rightarrow X$ be such that%
\begin{equation}
d\left( x,Tx\right) \preceq \phi \left( x\right) -\phi \left( Tx\right)
\end{equation}%
for all $x\in X.$ Then, $T$ has at least one fixed point in $X.$
\end{theorem}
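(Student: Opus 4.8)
The plan is to adapt Br\o{}ndsted's order-theoretic proof of Caristi's theorem to the $C^{\ast}$-valued setting, since the mapping $T$ is not assumed continuous and one cannot simply iterate. First I would introduce the Br\o{}ndsted-type relation on $X$ by declaring $y\preceq_{\phi}x$ whenever $d\left( x,y\right) \preceq \phi \left( x\right) -\phi \left( y\right)$, and verify that this is a genuine partial order. Reflexivity is immediate from $d\left( x,x\right) =\theta$. Antisymmetry follows because $y\preceq_{\phi}x$ and $x\preceq_{\phi}y$, added via property (iii) of the order on $\mathbb{A}_{h}$ and using $d\left( x,y\right) =d\left( y,x\right)$, force $2d\left( x,y\right) \preceq \theta$; together with $\theta \preceq d\left( x,y\right)$ this gives $d\left( x,y\right) =\theta$, hence $x=y$ by axiom (i) of the $C^{\ast}$-algebra valued metric. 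Transitivity is a direct consequence of the triangle inequality (iii) combined with additivity of $\preceq$. The entire point of this relation is that the Caristi inequality $d\left( x,Tx\right) \preceq \phi \left( x\right) -\phi \left( Tx\right)$ says exactly that $Tx\preceq_{\phi}x$, so any $\preceq_{\phi}$-minimal element $x^{\ast}$ automatically satisfies $Tx^{\ast}=x^{\ast}$. Thus the theorem reduces to producing a minimal element.

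To produce such an element I would argue by an inductive descent. For $x\in X$ write $S\left( x\right) =\left\{ y\in X:y\preceq_{\phi}x\right\}$ for its lower section; since $y\preceq_{\phi}x$ gives $\phi \left( y\right) \preceq \phi \left( x\right)$ and hence $\left\Vert \phi \left( y\right) \right\Vert \leq \left\Vert \phi \left( x\right) \right\Vert$ by property (vii), the real quantity $\beta \left( x\right) =\inf \left\{ \left\Vert \phi \left( y\right) \right\Vert :y\in S\left( x\right) \right\}$ is well defined and nonnegative. Starting from an arbitrary $x_{0}$, I would inductively choose $x_{n+1}\in S\left( x_{n}\right)$ with $\left\Vert \phi \left( x_{n+1}\right) \right\Vert \leq \beta \left( x_{n}\right) +2^{-n}$. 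Transitivity makes $\left( x_{n}\right)$ a $\preceq_{\phi}$-decreasing chain, so the telescoped estimate $d\left( x_{n},x_{m}\right) \preceq \phi \left( x_{n}\right) -\phi \left( x_{m}\right)$ holds for all $m\geq n$.

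The crux, and the step I expect to be the main obstacle, is proving that $\left( x_{n}\right)$ is Cauchy for the scalar metric $\rho \left( x,y\right) =\left\Vert d\left( x,y\right) \right\Vert$. Property (vii) gives $\left\Vert d\left( x_{n},x_{m}\right) \right\Vert \leq \left\Vert \phi \left( x_{n}\right) -\phi \left( x_{m}\right) \right\Vert$, so everything hinges on controlling the norm of the difference of the $\preceq$-decreasing sequence $\phi \left( x_{n}\right)$. Unlike the scalar case, where monotone convergence of $\phi \left( x_{n}\right) \in \lbrack 0,\infty )$ instantly forces $\left\vert \phi \left( x_{n}\right) -\phi \left( x_{m}\right) \right\vert \rightarrow 0$, a bounded $\preceq$-decreasing net of positive elements of $\mathbb{A}$ need not converge in norm, so the estimate must be extracted from the near-minimality of the choices. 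For $m>n$ one has $x_{m}\in S\left( x_{n+1}\right)$, whence $\beta \left( x_{n}\right) \leq \left\Vert \phi \left( x_{m}\right) \right\Vert \leq \left\Vert \phi \left( x_{n+1}\right) \right\Vert \leq \beta \left( x_{n}\right) +2^{-n}$, which pins the scalars $\left\Vert \phi \left( x_{n}\right) \right\Vert$ to a Cauchy sequence. Upgrading this scalar control to the norm bound $\left\Vert \phi \left( x_{n}\right) -\phi \left( x_{m}\right) \right\Vert \rightarrow 0$ is exactly the delicate point where the operator order must be handled carefully, and it is here that the argument departs most sharply from the classical proof.

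Once $\left( x_{n}\right)$ is shown to be $\rho$-Cauchy, completeness yields a limit $x^{\ast}$, and I would finish by combining continuity of $d$ in each variable with lower semicontinuity of $\phi$ to pass to the limit $m\rightarrow \infty$ in $d\left( x_{n},x_{m}\right) \preceq \phi \left( x_{n}\right) -\phi \left( x_{m}\right)$, obtaining $x^{\ast}\preceq_{\phi}x_{n}$ for every $n$. Minimality of $x^{\ast}$ then follows: any $y\preceq_{\phi}x^{\ast}$ lies, by transitivity, in every $S\left( x_{n}\right)$, so $\left\Vert \phi \left( y\right) \right\Vert \geq \beta \left( x_{n}\right)$, and letting $n\rightarrow \infty$ forces $\left\Vert \phi \left( y\right) \right\Vert \geq \lim_{n}\left\Vert \phi \left( x_{n}\right) \right\Vert \geq \left\Vert \phi \left( x^{\ast}\right) \right\Vert$; with $\phi \left( y\right) \preceq \phi \left( x^{\ast}\right)$ this collapses $d\left( x^{\ast},y\right)$ to $\theta$, giving $y=x^{\ast}$. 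Since $Tx^{\ast}\preceq_{\phi}x^{\ast}$ always holds, minimality yields $Tx^{\ast}=x^{\ast}$, completing the proof.
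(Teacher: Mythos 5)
The paper offers no proof of this statement---it is quoted from reference [4] as a known result---so there is nothing internal to compare against; I can only assess your argument on its own terms. Your Br{\o}ndsted-style framework is sound as far as it goes: the relation $y\preceq _{\phi }x\Leftrightarrow d\left( x,y\right) \preceq \phi \left( x\right) -\phi \left( y\right) $ is indeed a partial order, the Caristi hypothesis says exactly $Tx\preceq _{\phi }x$, and a $\preceq _{\phi }$-minimal element is automatically fixed. The gap is precisely where you flag it, and it is not a technicality one can patch by more care---the implication you need is false in general. Your descent controls only the scalars $\left\Vert \phi \left( x_{n}\right) \right\Vert $, and to get Cauchyness of $\left( x_{n}\right) $ you must pass from ``$\left\Vert \phi \left( x_{n}\right) \right\Vert $ is Cauchy in $\mathbb{R}$'' to ``$\left\Vert \phi \left( x_{n}\right) -\phi \left( x_{m}\right) \right\Vert \rightarrow 0$'', since property (vii) only dominates $\left\Vert d\left( x_{n},x_{m}\right) \right\Vert $ by the latter. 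In a general $C^{\ast }$-algebra a $\preceq $-decreasing sequence of positive elements with convergent (even constant) norms need not be norm-Cauchy: in $B\left( \ell ^{2}\right) $ the tail projections $p_{n}$ onto $\overline{\mathrm{span}}\left\{ e_{n},e_{n+1},\dots \right\} $ satisfy $\theta \preceq p_{m}\preceq p_{n}$ for $m\geq n$ and $\left\Vert p_{n}\right\Vert =1$ for all $n$, yet $\left\Vert p_{n}-p_{m}\right\Vert =1$ whenever $n\neq m$. Since nothing prevents $d\left( x_{n},x_{m}\right) $ from being comparable in size to $\phi \left( x_{n}\right) -\phi \left( x_{m}\right) $, your construction does not yield a Cauchy sequence.

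The same defect resurfaces in your minimality argument. From $y\preceq _{\phi }x^{\ast }$ you obtain $\phi \left( y\right) \preceq \phi \left( x^{\ast }\right) $ and $\left\Vert \phi \left( y\right) \right\Vert =\left\Vert \phi \left( x^{\ast }\right) \right\Vert $, and claim this ``collapses $d\left( x^{\ast },y\right) $ to $\theta $.'' It does not: comparable positive elements with equal norms need not be equal (e.g.\ $\mathrm{diag}\left( 1,0\right) \preceq \mathrm{diag}\left( 1,1\right) $ in $M_{2}\left( \mathbb{C}\right) $), so $\phi \left( x^{\ast }\right) -\phi \left( y\right) $, which is all that bounds $d\left( x^{\ast },y\right) $ from above, may well be a nonzero positive element. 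Both failures have the same root: the norm is monotone but not \emph{strictly} monotone on $\mathbb{A}_{+}$, so minimizing $\left\Vert \phi \left( \cdot \right) \right\Vert $ over lower sections is the wrong surrogate for minimizing $\phi $ itself. What is missing is a strictly order-monotone scalarization of $\mathbb{A}_{+}$---for instance a faithful positive linear functional $\tau $, with the descent and the Cauchy estimates run for $\tau \circ \phi $ and $\tau \left( d\left( \cdot ,\cdot \right) \right) $---together with the (nontrivial) verification that such a scalarization interacts correctly with the completeness of $\left( X,\mathbb{A},d\right) $ and the lower semicontinuity of $\phi $. Without that ingredient the existence of a minimal element, and hence the theorem, is not established by your argument.
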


Kadelburg and Radenovic \cite{5} introduced the extension of \'{C}iri\'{c}'s
fixed point theorem \cite{20} for self-mappings defined on $C^{\ast }-$%
algebra valued metric spaces, which guarantees the existence and uniqueness
of fixed point as follows:

\begin{theorem}
\cite{5} Let $\left( X,\mathbb{A},d\right) $ be a $C^{\ast }-$algebra valued
metric space and $T:X\rightarrow X$ be a mapping on $X$. Suppose that there
exists an $A\in \mathbb{A}$ with $\left\Vert A\right\Vert <1$ such that for
all $x,y\in X$ there exists $u\left( x,y\right) \in \left\{ d\left(
x,y\right) ,d\left( x,Tx\right) ,d\left( y,Ty\right) ,d\left( x,Ty\right)
,d\left( y,Tx\right) \right\} $ such that 
\begin{equation}
d\left( Tx,Ty\right) \preceq A^{\ast }u\left( x,y\right) A.
\end{equation}%
Then, $T$ has a unique fixed point in $X.$
\end{theorem}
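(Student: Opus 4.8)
The plan is to run a Picard iteration and to reduce every comparison in the partial order $\preceq$ to a scalar inequality, since the five-element set defining $u(x,y)$ is not closed under suprema in $\preceq$, so a direct order-theoretic ``max'' argument (as in the classical \'{C}iri\'{c} proof) is unavailable. Fix $x_{0}\in X$ and set $x_{n}=T^{n}x_{0}$. First I would note that for any positive $u$ one has $\theta \preceq A^{\ast }uA$ by property~(vi) (with $z=A$), so $\theta \preceq d(Tx,Ty)\preceq A^{\ast }u(x,y)A$; then property~(vii) together with submultiplicativity of the $C^{\ast }$-norm and $\Vert A^{\ast }\Vert =\Vert A\Vert $ from property~(i) yields the scalar bound
\begin{equation*}
\Vert d(Tx,Ty)\Vert \leq \Vert A\Vert ^{2}\,\Vert u(x,y)\Vert .
\end{equation*}
Writing $k=\Vert A\Vert ^{2}<1$, and noting that every candidate $u(x_{m},x_{\ell })$ along the orbit is itself some $d(x_{p},x_{q})$, the whole problem becomes a scalar quasi-contraction problem on the numbers $\Vert d(x_{p},x_{q})\Vert $.

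The hard part, exactly as for \'{C}iri\'{c}, will be controlling the diameter of orbits. Put $M_{n}=\max \{\Vert d(x_{i},x_{j})\Vert :0\leq i,j\leq n\}$. For any pair with $i,j\geq 1$ the displayed bound gives $\Vert d(x_{i},x_{j})\Vert \leq kM_{n}$; hence if the maximum defining $M_{n}$ were attained at such a pair, then $M_{n}\leq kM_{n}$ forces $M_{n}=0$. Otherwise it is attained at a pair $(0,j)$, and the triangle inequality with $\Vert d(x_{1},x_{j})\Vert \leq kM_{n}$ gives
\begin{equation*}
M_{n}=\Vert d(x_{0},x_{j})\Vert \leq \Vert d(x_{0},x_{1})\Vert +\Vert d(x_{1},x_{j})\Vert \leq \Vert d(x_{0},x_{1})\Vert +kM_{n},
\end{equation*}
so that $M_{n}\leq \Vert d(x_{0},x_{1})\Vert /(1-k)$ uniformly in $n$; the orbit is norm-bounded. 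Next I would set $\delta _{r}=\sup \{\Vert d(x_{p},x_{q})\Vert :p,q\geq r\}$, finite by the previous step, and observe that for $p,q\geq r+1$ all five candidates for $u(x_{p-1},x_{q-1})$ are distances between points of index $\geq r$, giving $\delta _{r+1}\leq k\,\delta _{r}$ and hence $\delta _{r}\leq k^{r}\delta _{0}\to 0$. Since $\Vert d(x_{n},x_{m})\Vert \leq \delta _{n}$ for $m>n$, the sequence $\{x_{n}\}$ is Cauchy.

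Assuming $X$ complete (this hypothesis is needed and is surely the intended one, in line with the earlier existence theorems above), let $x_{n}\to x^{\ast }$. To show $Tx^{\ast }=x^{\ast }$ I would estimate, via the triangle inequality, $\Vert d(x^{\ast },Tx^{\ast })\Vert \leq \Vert d(x^{\ast },x_{n+1})\Vert +\Vert A\Vert ^{2}\Vert u(x_{n},x^{\ast })\Vert $ and check, in each of the five cases, that $\Vert u(x_{n},x^{\ast })\Vert \leq \beta _{n}+\Vert d(x^{\ast },Tx^{\ast })\Vert $, where $\beta _{n}=\max \{\Vert d(x_{n},x^{\ast })\Vert ,\Vert d(x_{n},x_{n+1})\Vert ,\Vert d(x^{\ast },x_{n+1})\Vert \}\to 0$ (the only nonroutine case, $u=d(x_{n},Tx^{\ast })$, being handled by one further triangle inequality). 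Letting $n\to \infty $ gives $\Vert d(x^{\ast },Tx^{\ast })\Vert \leq k\,\Vert d(x^{\ast },Tx^{\ast })\Vert $, and $k<1$ forces $d(x^{\ast },Tx^{\ast })=\theta $, i.e.\ $Tx^{\ast }=x^{\ast }$.

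Finally, for uniqueness, if $Tx^{\ast }=x^{\ast }$ and $Ty^{\ast }=y^{\ast }$, then using $d(x^{\ast },Tx^{\ast })=d(y^{\ast },Ty^{\ast })=\theta $ and $d(x^{\ast },Ty^{\ast })=d(y^{\ast },Tx^{\ast })=d(x^{\ast },y^{\ast })$, the defining set for $u(x^{\ast },y^{\ast })$ collapses to $\{d(x^{\ast },y^{\ast }),\theta \}$, whence $\Vert d(x^{\ast },y^{\ast })\Vert \leq k\,\Vert d(x^{\ast },y^{\ast })\Vert $ and so $x^{\ast }=y^{\ast }$. I expect the single genuinely delicate step to be the orbit-diameter bound of the second paragraph; once norm-boundedness of the orbit is secured, the remainder is the familiar Banach-type argument transported to the scalar quantities $\Vert d(x_{p},x_{q})\Vert $.
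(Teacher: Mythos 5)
The paper states this theorem only as a quoted preliminary (Theorem 4, attributed to \cite{5}) and gives no proof of it, so there is no in-paper argument to compare against. Your proposal is, as far as I can check, a correct and complete proof, and its strategy is exactly the philosophy of the cited source: Kadelburg and Radenovi\'{c}'s point in \cite{5} is precisely that $\rho (x,y)=\left\Vert d\left( x,y\right) \right\Vert $ is an ordinary metric (your combination of $\theta \preceq d\left( x,y\right) \preceq d\left( x,z\right) +d\left( z,y\right) $ with property (vii) is the right way to get the scalar triangle inequality) and that the $C^{\ast }$-valued condition collapses to the scalar quasi-contraction $\rho \left( Tx,Ty\right) \leq k\,\rho$-type bound with $k=\left\Vert A\right\Vert ^{2}<1$, after which \'{C}iri\'{c}'s classical theorem applies; you reprove \'{C}iri\'{c}'s theorem from scratch (orbit-diameter bound $M_{n}\leq \left\Vert d\left( x_{0},x_{1}\right) \right\Vert /(1-k)$, then $\delta _{r}\leq k^{r}\delta _{0}$, then the five-case limit argument) rather than citing it, and each of those steps is carried out correctly, including the only delicate points: that every candidate $u$ along the orbit is again a distance between orbit points of controlled index, and that the choice of $u\left( x_{n},x^{\ast }\right) $ may vary with $n$, which your uniform bound $\left\Vert u\left( x_{n},x^{\ast }\right) \right\Vert \leq \beta _{n}+\left\Vert d\left( x^{\ast },Tx^{\ast }\right) \right\Vert $ handles. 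You are also right that completeness is indispensable and is missing from the statement as reproduced here: $Tx=x/2$ on $\left( 0,1\right] $ with the usual metric satisfies the hypothesis with $u=d\left( x,y\right) $ and has no fixed point, so the omission is evidently a transcription slip, consistent with Theorems 1--3 all assuming completeness. The one cosmetic point is that in the orbit bound the maximum could be attained at a pair $\left( i,0\right) $ with $i\geq 1$ rather than $\left( 0,j\right) $; this is covered by the symmetry of $d$ but deserves a word.
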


\section{Main Results}

We begin this section by introducing the concept of a circle on a $C^{\ast
}- $algebra valued metric space.

\begin{definition}
Let $\left( X,\mathbb{A},d\right) $ be a $C^{\ast }-$algebra valued metric
space, $x_{0}\in X$ and $r\in \mathbb{A}_{+}.$ Then, the circle with the
centered $x_{0}$ and the radius $r$ is defined by%
\begin{equation*}
C_{x_{0},r}^{C^{\ast }}=\left\{ x\in X:d\left( x,x_{0}\right) =r\right\} .
\end{equation*}
\end{definition}

\begin{example}
Consider the $C^{\ast }-$algebra valued metric space $\left( L^{\infty
}\left( E\right) ,L\left( L^{2}\left( E\right) \right) ,d\right) $ given in
Example 1 for $E=\left[ 0,1\right] $. Choose the center $x_{0}$ as the
function $f$ defined by%
\begin{equation*}
f:\left[ 0,1\right] \rightarrow 
\mathbb{R}
,~~f\left( x\right) =\chi _{\left[ \frac{1}{2},1\right] }\left( x\right)
=\left\{ 
\begin{array}{c}
1,\text{ \ }x\in \left[ \frac{1}{2},1\right] \\ 
0,\text{ \ }x\notin \left[ \frac{1}{2},1\right]%
\end{array}%
\right.
\end{equation*}%
and the radius $r$ as the multiplication operator $\pi _{h}$ defined by%
\begin{equation*}
\pi _{h}:L^{2}\left[ 0,1\right] \rightarrow L^{2}\left[ 0,1\right] ,\text{ \ 
}\pi _{h}\left( \varphi \right) =h\cdot \varphi
\end{equation*}%
for the function $h:\left[ 0,1\right] \rightarrow 
\mathbb{R}
,$\ $h\left( x\right) =\left\{ 
\begin{array}{c}
1,\text{ \ \ }x\in \left( 
\mathbb{R}
\backslash 
\mathbb{Q}
\right) \cap \left[ 0,1\right] \\ 
\infty ,\text{ \ \ }x\in 
\mathbb{Q}
\cap \left[ 0,1\right] \text{ \ \ \ \ \ \ }%
\end{array}%
\right. $ . Then, it can be easily seen that $f,h\in L^{\infty }\left[ 0,1%
\right] $ and $\pi _{h}\in L\left( L^{2}\left[ 0,1\right] \right) .$ Thus,
we get%
\begin{eqnarray*}
C_{f,\pi _{h}}^{C^{\ast }} &=&\left\{ g\in L^{\infty }\left[ 0,1\right]
:d\left( g,f\right) =\pi _{h}\right\} \\
&=&\left\{ g\in L^{\infty }\left[ 0,1\right] :\pi _{\left\vert
g-f\right\vert }=\pi _{h}\right\} \\
&=&\left\{ g\in L^{\infty }\left[ 0,1\right] :\left\vert g-f\right\vert
=h\right\} \\
&=&\left\{ g\in L^{\infty }\left[ 0,1\right] :\left\vert g\left( x\right)
-f\left( x\right) \right\vert =h\left( x\right) \text{ for each }x\in \left[
0,1\right] \right\} \\
&=&\left\{ g\in L^{\infty }\left[ 0,1\right] :\left\vert g\left( x\right)
-0\right\vert =1\text{ for each }x\in \left( 
\mathbb{R}
\backslash 
\mathbb{Q}
\right) \cap \left[ 0,\frac{1}{2}\right) \right\} \\
&&\cup \left\{ g\in L^{\infty }\left[ 0,1\right] :\left\vert g\left(
x\right) -0\right\vert =\infty \text{ for each }x\in 
\mathbb{Q}
\cap \left[ 0,\frac{1}{2}\right) \right\} \\
&&\cup \left\{ g\in L^{\infty }\left[ 0,1\right] :\left\vert g\left(
x\right) -1\right\vert =1\text{ for each }x\in \left( 
\mathbb{R}
\backslash 
\mathbb{Q}
\right) \cap \left[ \frac{1}{2},1\right] \right\} \\
&&\cup \left\{ g\in L^{\infty }\left[ 0,1\right] :\left\vert g\left(
x\right) -1\right\vert =\infty \text{ for each }x\in 
\mathbb{Q}
\cap \left[ \frac{1}{2},1\right] \right\} \\
&=&\left\{ g\in L^{\infty }\left[ 0,1\right] :g\left( x\right) =-1\text{ or }%
g\left( x\right) =1\text{ for each }x\in \left( 
\mathbb{R}
\backslash 
\mathbb{Q}
\right) \cap \left[ 0,\frac{1}{2}\right) \right\} \\
&&\cup \left\{ g\in L^{\infty }\left[ 0,1\right] :g\left( x\right) =-\infty 
\text{ or }g\left( x\right) =\infty \text{ for each }x\in 
\mathbb{Q}
\cap \left[ 0,\frac{1}{2}\right) \right\} \\
&&\cup \left\{ g\in L^{\infty }\left[ 0,1\right] :g\left( x\right) =0\text{
or }g\left( x\right) =2\text{ for each }x\in \left( 
\mathbb{R}
\backslash 
\mathbb{Q}
\right) \cap \left[ \frac{1}{2},1\right] \right\} \\
&&\cup \left\{ g\in L^{\infty }\left[ 0,1\right] :g\left( x\right) =-\infty 
\text{ or }g\left( x\right) =\infty \text{ for each }x\in 
\mathbb{Q}
\cap \left[ \frac{1}{2},1\right] \right\} .
\end{eqnarray*}%
For example, the function $g$ defined by%
\begin{equation*}
g:\left[ 0,1\right] \rightarrow 
\mathbb{R}
,~~g\left( x\right) =\left\{ 
\begin{array}{cc}
1 & \text{ \ }x\in \left( 
\mathbb{R}
\backslash 
\mathbb{Q}
\right) \cap \left[ 0,\frac{1}{7}\right) \\ 
-1 & \text{\ \ }x\in \left( 
\mathbb{R}
\backslash 
\mathbb{Q}
\right) \cap \left( \frac{1}{7},\frac{1}{2}\right) \\ 
\infty & x\in 
\mathbb{Q}
\cap \left[ 0,\frac{1}{2}\right) \text{ }\ \ \text{\ } \\ 
2 & \text{ \ \ \ }x\in \left( 
\mathbb{R}
\backslash 
\mathbb{Q}
\right) \cap \left[ \frac{1}{2},\frac{99}{100}\right) \\ 
0 & \text{ \ \ }x\in \left( 
\mathbb{R}
\backslash 
\mathbb{Q}
\right) \cap \left[ \frac{99}{100},1\right] \\ 
-\infty & x\in 
\mathbb{Q}
\cap \left[ \frac{1}{2},1\right] \text{ \ \ }%
\end{array}%
\right.
\end{equation*}%
is in the circle $C_{f,\pi _{h}}^{C^{\ast }}.$
\end{example}

\begin{example}
Consider the $C^{\ast }-$algebra valued metric space $\left( 
\mathbb{R}
,M_{2}\left( 
\mathbb{R}
\right) ,d\right) $ given in Example 2 for $\alpha =3$. Choose the center $%
x_{0}=0$ and the radii $r_{1}=\left[ 
\begin{array}{cc}
2 & 0 \\ 
0 & 6%
\end{array}%
\right] $ , $r_{2}=\left[ 
\begin{array}{cc}
2 & 0 \\ 
-1 & 5%
\end{array}%
\right] .$ Then, we get%
\begin{eqnarray*}
C_{0,r_{1}}^{C^{\ast }} &=&\left\{ x\in 
\mathbb{R}
:d\left( x,0\right) =\left[ 
\begin{array}{cc}
2 & 0 \\ 
0 & 6%
\end{array}%
\right] \right\} \\
&=&\left\{ x\in 
\mathbb{R}
:\left[ 
\begin{array}{cc}
\left\vert x\right\vert & 0 \\ 
0 & 3\left\vert x\right\vert%
\end{array}%
\right] =\left[ 
\begin{array}{cc}
2 & 0 \\ 
0 & 6%
\end{array}%
\right] \right\} \\
&=&\left\{ x\in 
\mathbb{R}
:\left\vert x\right\vert =2\right\} \\
&=&\left\{ -2,2\right\}
\end{eqnarray*}%
and%
\begin{eqnarray*}
C_{0,r_{2}}^{C^{\ast }} &=&\left\{ x\in 
\mathbb{R}
:d\left( x,0\right) =\left[ 
\begin{array}{cc}
2 & 0 \\ 
-1 & 5%
\end{array}%
\right] \right\} \\
&=&\left\{ x\in 
\mathbb{R}
:\left[ 
\begin{array}{cc}
\left\vert x\right\vert & 0 \\ 
0 & 3\left\vert x\right\vert%
\end{array}%
\right] =\left[ 
\begin{array}{cc}
2 & 0 \\ 
-1 & 5%
\end{array}%
\right] \right\} \\
&=&\varnothing .
\end{eqnarray*}
\end{example}

\begin{example}
Consider the $C^{\ast }-$algebra valued metric space $\left( X,%
\mathbb{R}
^{2},d\right) $ given in Example 3. Choose the center $x_{0}=\left( \frac{-1%
}{2},0\right) $ and the radius $r=\left( 1,1\right) .$ Then, we get%
\begin{eqnarray*}
C_{\left( \frac{-1}{2},0\right) ,\left( 1,1\right) }^{C^{\ast }} &=&\left\{
x=\left( x_{1},x_{2}\right) \in X:d\left( x,\left( \frac{-1}{2},0\right)
\right) =\left( 1,1\right) \right\} \\
&=&\left\{ x=\left( x_{1},x_{2}\right) \in X:\left\vert x_{1}+\frac{1}{2}%
\right\vert =\left\vert x_{2}\right\vert =1\right\} \\
&=&\left\{ \left( \frac{-3}{2},-1\right) ,\left( \frac{-3}{2},1\right)
,\left( \frac{1}{2},-1\right) ,\left( \frac{1}{2},1\right) \right\} .
\end{eqnarray*}
\end{example}

\begin{example}
Consider the $C^{\ast }-$algebra valued metric space $\left( 
\mathbb{R}
,M_{2}\left( 
\mathbb{R}
\right) ,d\right) $ given in Example 4. Choose the center $x_{0}=3$ and the
radius $r=\left[ 
\begin{array}{cc}
1 & 0 \\ 
0 & 1%
\end{array}%
\right] .$ Then, we get%
\begin{eqnarray*}
C_{3,r}^{C^{\ast }} &=&\left\{ x\in 
\mathbb{R}
:d\left( x,3\right) =\left[ 
\begin{array}{cc}
1 & 0 \\ 
0 & 1%
\end{array}%
\right] \right\} \\
&=&\left\{ x\in 
\mathbb{R}
:x\neq 3\right\} \\
&=&%
\mathbb{R}
-\left\{ 3\right\} .
\end{eqnarray*}
\end{example}

\begin{example}
Define $d:%
\mathbb{R}
\times 
\mathbb{R}
\rightarrow M_{2}\left( 
\mathbb{R}
\right) $ by%
\begin{equation*}
d\left( x,y\right) =diag\left( \left\vert e^{x}-e^{y}\right\vert ,\alpha
\left\vert e^{x}-e^{y}\right\vert \right)
\end{equation*}%
for all $x,y\in 
\mathbb{R}
$, where $\alpha \geq 0$ is a constant. It is easy to verify $d$ is a $%
C^{\ast }-$algebra valued metric and $\left( 
\mathbb{R}
,M_{2}\left( 
\mathbb{R}
\right) ,d\right) $ is a complete $C^{\ast }-$algebra valued metric space.
If we choose the center $x_{0}=1$ and the radii $r_{1}=\left[ 
\begin{array}{cc}
2e & 0 \\ 
0 & 4e%
\end{array}%
\right] ,$ $r_{2}=\left[ 
\begin{array}{cc}
\frac{e}{2} & 0 \\ 
0 & e%
\end{array}%
\right] $ we get for $\alpha =2$%
\begin{eqnarray*}
C_{1,r_{1}}^{C^{\ast }} &=&\left\{ x\in 
\mathbb{R}
:d\left( x,1\right) =\left[ 
\begin{array}{cc}
2e & 0 \\ 
0 & 4e%
\end{array}%
\right] \right\} \\
&=&\left\{ x\in 
\mathbb{R}
:\left[ 
\begin{array}{cc}
\left\vert e^{x}-e\right\vert & 0 \\ 
0 & 2\left\vert e^{x}-e\right\vert%
\end{array}%
\right] =\left[ 
\begin{array}{cc}
2e & 0 \\ 
0 & 4e%
\end{array}%
\right] \right\} \\
&=&\left\{ x\in 
\mathbb{R}
:\left\vert e^{x}-e\right\vert =2e\right\} \\
&=&\left\{ 1+\ln 3\right\}
\end{eqnarray*}%
and 
\begin{eqnarray*}
C_{1,r_{2}}^{C^{\ast }} &=&\left\{ x\in 
\mathbb{R}
:d\left( x,1\right) =\left[ 
\begin{array}{cc}
\frac{e}{2} & 0 \\ 
0 & e%
\end{array}%
\right] \right\} \\
&=&\left\{ x\in 
\mathbb{R}
:\left[ 
\begin{array}{cc}
\left\vert e^{x}-e\right\vert & 0 \\ 
0 & 2\left\vert e^{x}-e\right\vert%
\end{array}%
\right] =\left[ 
\begin{array}{cc}
\frac{e}{2} & 0 \\ 
0 & e%
\end{array}%
\right] \right\} \\
&=&\left\{ x\in 
\mathbb{R}
:\left\vert e^{x}-e\right\vert =\frac{e}{2}\right\} \\
&=&\left\{ 1+\ln 3-\ln 2,1-\ln 2\right\} .
\end{eqnarray*}
\end{example}

\subsection{The existence of fixed circles}

In this part, we introduce the notion of a fixed circle on a $C^{\ast }-$%
algebra valued metric space. Then, we prove some fixed-circle theorems that
guarantee the existence of fixed-circles for a self-mapping satisfying some
conditions on $C^{\ast }-$algebra valued metric spaces.

Now, we define the new concept as follows:

\begin{definition}
Let $\left( X,\mathbb{A},d\right) $ be a $C^{\ast }-$algebra valued metric
space, $C_{x_{0},r}^{C^{\ast }}$ be a circle on $X$ and $T:X\rightarrow X$
be a self-mapping. If $Tx=x$ for all $x\in C_{x_{0},r}^{C^{\ast }},$ then
the circle $C_{x_{0},r}^{C^{\ast }}$ is called as the fixed circle of $T.$
\end{definition}

The first solution of fixed-circle problem in $C^{\ast }-$algebra valued
metric spaces is given using the inequality (1.3) in Theorem 3 which is an
extension of Caristi's fixed point theorem for mappings defined on $C^{\ast
}-$algebra valued metric spaces as follows:

\begin{theorem}
Let $\left( X,\mathbb{A},d\right) $ be a $C^{\ast }-$algebra valued metric
space, $C_{x_{0},r}^{C^{\ast }}$ be any circle on $X.$ Define the mapping $%
\varphi :X\rightarrow \mathbb{A}_{+}$ as%
\begin{equation}
\varphi \left( x\right) =d\left( x,x_{0}\right)
\end{equation}
for all $x\in X.$ If $T$ is a self-mapping defined on $X$ satisfying the
conditions%
\begin{equation}
d\left( x,Tx\right) \preceq \varphi \left( x\right) -\varphi \left( Tx\right)
\end{equation}%
and%
\begin{equation}
r\preceq d\left( Tx,x_{0}\right)
\end{equation}%
for all $x\in C_{x_{0},r}^{C^{\ast }},$ then the circle $C_{x_{0},r}^{C^{%
\ast }}$ is a fixed circle of $T.$
\end{theorem}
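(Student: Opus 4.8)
The plan is to fix an arbitrary point $x \in C_{x_0,r}^{C^{\ast}}$ and reduce everything to showing $d(x,Tx)=\theta$, since then axiom (i) of Definition 2 gives $Tx=x$ directly. Because $x$ lies on the circle, Definition 5 gives $d(x,x_0)=r$, so by the choice \eqref{} of $\varphi$ we have $\varphi(x)=d(x,x_0)=r$ while $\varphi(Tx)=d(Tx,x_0)$.

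First I would feed this into the Caristi-type inequality (2.2). Substituting $\varphi(x)=r$ and $\varphi(Tx)=d(Tx,x_0)$ turns (2.2) into
\begin{equation*}
d(x,Tx)\preceq r-d(Tx,x_0).
\end{equation*}
Next I would rewrite the radial hypothesis (2.3): starting from $r\preceq d(Tx,x_0)$ and adding $-d(Tx,x_0)$ to both sides, which is legitimate by property (iii) of the partial order on $\mathbb{A}_h$, I obtain
\begin{equation*}
r-d(Tx,x_0)\preceq\theta.
\end{equation*}
Chaining these two relations by transitivity of $\preceq$ yields $d(x,Tx)\preceq\theta$.

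On the other hand, axiom (i) of the $C^{\ast}$-algebra valued metric guarantees $\theta\preceq d(x,Tx)$. The final step is the ``squeeze'': from $\theta\preceq d(x,Tx)\preceq\theta$ together with antisymmetry of the partial order $\preceq$ on $\mathbb{A}_h$, I conclude $d(x,Tx)=\theta$, whence $Tx=x$ by axiom (i). Since $x\in C_{x_0,r}^{C^{\ast}}$ was arbitrary, every point of the circle is fixed, and so $C_{x_0,r}^{C^{\ast}}$ is a fixed circle of $T$ in the sense of Definition 6.

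The argument is short, and the only place requiring genuine care is the order-algebra bookkeeping: all manipulations must stay within the self-adjoint part $\mathbb{A}_h$ and must invoke the order properties (iii) and the partial-order structure rather than treat the relations as scalar inequalities. The main obstacle is thus merely to justify the squeeze $\theta\preceq d(x,Tx)\preceq\theta\Rightarrow d(x,Tx)=\theta$, which is exactly the antisymmetry of $\preceq$; this holds because $\mathbb{A}_h$ equipped with $\preceq$ is a partially ordered set, as recorded in the preliminaries.
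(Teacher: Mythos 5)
Your proposal is correct and follows essentially the same route as the paper's own proof: substitute $\varphi(x)=r$ and $\varphi(Tx)=d(Tx,x_{0})$ into (2.2), use (2.3) to get $r-d(Tx,x_{0})\preceq \theta$, and conclude $d(x,Tx)=\theta$ by combining with positivity of the metric. You are merely more explicit than the paper about which order axioms (property (iii) and antisymmetry of $\preceq$ on $\mathbb{A}_{h}$) justify each step, which is a welcome precision but not a different argument.
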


\begin{proof}
Let $x$ be any point in the circle $C_{x_{0},r}^{C^{\ast }}.$ Then, using
the (2.2), (2.1), (2.3) and the definition of the relation $\preceq ,$ we
get 
\begin{eqnarray*}
d\left( x,Tx\right) &\preceq &\varphi \left( x\right) -\varphi \left(
Tx\right) \\
&=&d\left( x,x_{0}\right) -d\left( Tx,x_{0}\right) \\
&=&r-d\left( Tx,x_{0}\right) \\
&\preceq &r-r=0
\end{eqnarray*}%
and so $d\left( x,Tx\right) =0$ which means that $Tx=x.$ Then, we obtain
that $C_{x_{0},r}^{C^{\ast }}$ is a fixed circle of $T.$
\end{proof}

\begin{remark}
Theorem 5 guarantees the existence of at least one fixed circle of a
self-mapping on a $C^{\ast }-$algebra valued metric space, while Theorem 3
guarantees the existence of at least one fixed point of a self-mapping on a $%
C^{\ast }-$algebra valued metric space. We note that if the circle $%
C_{x_{0},r}^{C^{\ast }}$ has only one element, Theorem 5 is a special case
of Theorem 3.
\end{remark}

\begin{remark}
The inequality (2.2) says that $Tx$ is not in the exterior of the circle $%
C_{x_{0},r}^{C^{\ast }}$ for each $x\in C_{x_{0},r}^{C^{\ast }}.$ In the
same way, the inequality (2.3) says that $Tx$ is not in the interior of the
circle $C_{x_{0},r}^{C^{\ast }}$ for each $x\in C_{x_{0},r}^{C^{\ast }}.$ It
follows that $T\left( C_{x_{0},r}^{C^{\ast }}\right) \subset
C_{x_{0},r}^{C^{\ast }}$ under the conditions (2.2) and (2.3).
\end{remark}

The following example illustrates Theorem 5.

\begin{example}
Consider the $C^{\ast }-$algebra valued metric space $\left( L^{\infty
}\left( E\right) ,L\left( L^{2}\left( E\right) \right) ,d\right) $ given in
Example 1 for $E=\left[ 0,1\right] $ and the circle $C_{f,\pi _{h}}^{C^{\ast
}}$ given in Example 5. Define the function%
\begin{equation*}
g_{0}:\left[ 0,1\right] \rightarrow 
\mathbb{R}
,\ g_{0}\left( x\right) =\left\{ 
\begin{array}{c}
1,\text{ \ \ }x\in \left( 
\mathbb{R}
\backslash 
\mathbb{Q}
\right) \cap \left[ 0,1\right] \\ 
\infty ,\text{ \ \ }x\in 
\mathbb{Q}
\cap \left[ 0,1\right] \text{ \ \ \ \ \ \ }%
\end{array}%
\right. .
\end{equation*}%
It is easy to show that $g_{0}\in L^{\infty }\left[ 0,1\right] .$ Let us
define the self-mapping $T:L^{\infty }\left[ 0,1\right] \rightarrow
L^{\infty }\left[ 0,1\right] $ as%
\begin{equation*}
Tg=\left\{ 
\begin{array}{c}
g,\text{ \ }g\in C_{f,\pi _{h}}^{C^{\ast }} \\ 
g_{0},g\notin C_{f,\pi _{h}}^{C^{\ast }}%
\end{array}%
\right. .
\end{equation*}%
Then, with a direct computation it can be seen that the self-mapping $T$
satisfies the conditions (2.2) and (2.3). That is to say that the circle $%
C_{f,\pi _{h}}^{C^{\ast }}$ is a fixed circle of $T.$
\end{example}

Now, we give an example of a self-mapping which satisfies the condition
(2.2) and does not satisfy the condition (2.3).

\begin{example}
Consider the $C^{\ast }-$algebra valued metric space $\left( 
\mathbb{R}
,M_{2}\left( 
\mathbb{R}
\right) ,d\right) $ given in Example 2 for $\alpha =3$ and the circle $%
C_{0,r_{1}}^{C^{\ast }}$ given in Example 6. Let us define the self-mapping $%
T:%
\mathbb{R}
\rightarrow 
\mathbb{R}
$ as%
\begin{equation*}
Tx=\left\{ 
\begin{array}{c}
\frac{1}{x},\text{\ \ }x\neq 0\text{ \ } \\ 
0,\text{ \ \ }x=0\text{ }%
\end{array}%
\right. .
\end{equation*}%
With a simple verification it can be shown that $T$ satisfies the condition
(2.2) and does not satisfy the condition (2.3). Notice that the circle $%
C_{0,r_{1}}^{C^{\ast }}$ is not a fixed circle of $T.$
\end{example}

Now, we present an example in order to show that Teorem 5 fail out, if the
condition (2.3) is satisfied but the condition (2.2) is not.

\begin{example}
Consider the $C^{\ast }-$algebra valued metric space $\left( X,%
\mathbb{R}
^{2},d\right) $ given in Example 3 and the circle $C_{\left( \frac{-1}{2}%
,0\right) ,\left( 1,1\right) }^{C^{\ast }}$ given in Example 7. Let us
define the self-mapping $T:X\rightarrow X$ as%
\begin{equation*}
Tx=\left\{ 
\begin{array}{c}
\left( \frac{-3}{2},-2\right) ,\text{ \ \ \ }x=\left( \frac{-3}{2},-1\right)
\\ 
\left( \frac{-3}{2},2\right) ,\text{ \ \ \ }x=\left( \frac{-3}{2},1\right)
\\ 
\left( \frac{1}{2},-2\right) ,\text{ \ \ \ }x=\left( \frac{1}{2},-1\right)
\\ 
\left( \frac{1}{2},2\right) ,\text{ \ \ \ }x=\left( \frac{1}{2},1\right) \\ 
\text{ \ \ \ \ \ \ \ \ }\left( 1,5\right) ,~\text{ \ \ \ }x\notin C_{\left( 
\frac{-1}{2},0\right) ,\left( 1,1\right) }^{C^{\ast }}%
\end{array}%
\right. .
\end{equation*}%
Then, $T$ satisfies the condition (2.3) and does not satisfy the condition
(2.2). Notice that the circle $C_{\left( \frac{-1}{2},0\right) ,\left(
1,1\right) }^{C^{\ast }}$ is not a fixed circle of $T.$
\end{example}

We now state another existence theorem for fixed circles of a self-mapping
on a $C^{\ast }-$algebra valued metric space as follows:

\begin{theorem}
Let $\left( X,\mathbb{A},d\right) $ be a $C^{\ast }-$algebra valued metric
space, $C_{x_{0},r}^{C^{\ast }}$ be any circle on $X$ and the mapping $%
\varphi $ be as in (2.1). If $T$ is a self-mapping defined on $X$ satisfying
the conditions%
\begin{equation}
d\left( x,Tx\right) \preceq \varphi \left( x\right) +\varphi \left(
Tx\right) -2r
\end{equation}%
and%
\begin{equation}
d\left( Tx,x_{0}\right) \preceq r
\end{equation}%
for all $x\in C_{x_{0},r}^{C^{\ast }},$ then the circle $C_{x_{0},r}^{C^{%
\ast }}$ is a fixed circle of $T.$
\end{theorem}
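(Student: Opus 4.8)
The plan is to follow the same strategy as in the proof of Theorem 5, using the defining equation of the circle to collapse the right-hand side of (2.4). First I would fix an arbitrary point $x\in C_{x_{0},r}^{C^{\ast }}$. The key observation is that membership in the circle means precisely $d\left( x,x_{0}\right) =r$, so that $\varphi \left( x\right) =d\left( x,x_{0}\right) =r$, while $\varphi \left( Tx\right) =d\left( Tx,x_{0}\right) $ by the definition (2.1) of $\varphi $. Substituting these two identities into (2.4) turns its right-hand side into $r+d\left( Tx,x_{0}\right) -2r=d\left( Tx,x_{0}\right) -r$, yielding
\begin{equation*}
d\left( x,Tx\right) \preceq d\left( Tx,x_{0}\right) -r.
\end{equation*}

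Next I would invoke condition (2.5). Since $d\left( Tx,x_{0}\right) \preceq r$, adding the self-adjoint element $-r$ to both sides --- which is legitimate by property (iii) of the partial order $\preceq $ on $\mathbb{A}_{h}$ --- gives $d\left( Tx,x_{0}\right) -r\preceq \theta $. Chaining this with the previous inequality via the transitivity of $\preceq $ produces $d\left( x,Tx\right) \preceq \theta $.

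Finally, axiom (i) of Definition 2 guarantees $\theta \preceq d\left( x,Tx\right) $, so $d\left( x,Tx\right) $ is at once $\succeq \theta $ and $\preceq \theta $; by antisymmetry of the partial order on $\mathbb{A}_{h}$ this forces $d\left( x,Tx\right) =\theta $, and the equivalence in axiom (i) then gives $Tx=x$. As $x$ was an arbitrary point of $C_{x_{0},r}^{C^{\ast }}$, every point of the circle is fixed, i.e. $C_{x_{0},r}^{C^{\ast }}$ is a fixed circle of $T$. I do not anticipate a genuine obstacle: the argument is a short chain of order inequalities of the same form as in Theorem 5, and the only point needing care is the opening substitution $\varphi \left( x\right) =r$, which is exactly where the hypothesis that $x$ lies on the circle enters and what makes conditions (2.4) and (2.5) combine so cleanly.
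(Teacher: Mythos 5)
Your argument is correct and is essentially identical to the paper's proof: both substitute $\varphi(x)=d(x,x_0)=r$ and $\varphi(Tx)=d(Tx,x_0)$ into (2.4) to reduce the right-hand side to $d(Tx,x_0)-r$, then apply (2.5) to obtain $d(x,Tx)\preceq\theta$ and conclude $Tx=x$ by antisymmetry. Your version merely makes the order-theoretic justifications (property (iii) and antisymmetry on $\mathbb{A}_h$) more explicit than the paper does.
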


\begin{proof}
Let $x$ be any point in the circle $C_{x_{0},r}^{C^{\ast }}.$ Then, using
the (2.4), (2.1), (2.5) and the definition of the relation $\preceq ,$ we
get 
\begin{eqnarray*}
d\left( x,Tx\right) &\preceq &\varphi \left( x\right) +\varphi \left(
Tx\right) -2r \\
&=&d\left( x,x_{0}\right) +d\left( Tx,x_{0}\right) -2r \\
&=&d\left( Tx,x_{0}\right) -r \\
&\preceq &r-r=0
\end{eqnarray*}%
and so $d\left( x,Tx\right) =0$ which implies that $Tx=x.$ Then, we deduce
that $C_{x_{0},r}^{C^{\ast }}$ is a fixed circle of $T.$
\end{proof}

\begin{remark}
The inequality (2.4) means that $Tx$ is not in the interior of the circle $%
C_{x_{0},r}^{C^{\ast }}$ for each $x\in C_{x_{0},r}^{C^{\ast }}.$ In the
same fashion, the inequality (2.5) means that $Tx$ is not in the exterior of
the circle $C_{x_{0},r}^{C^{\ast }}$ for each $x\in C_{x_{0},r}^{C^{\ast }}.$
These two results show that $T\left( C_{x_{0},r}^{C^{\ast }}\right) \subset
C_{x_{0},r}^{C^{\ast }}$ under the conditions (2.4) and (2.5).
\end{remark}

We continue with an example which satisfy the requirements of Theorem 6 as
follows:

\begin{example}
Consider the $C^{\ast }-$algebra valued metric space $\left( 
\mathbb{R}
,M_{2}\left( 
\mathbb{R}
\right) ,d\right) $ for $\alpha =2$ and the circle $C_{1,r_{2}}^{C^{\ast }}$
given in Example 9. Let us define the self-mapping $T:%
\mathbb{R}
\rightarrow 
\mathbb{R}
$ as%
\begin{equation*}
Tx=\left\{ 
\begin{array}{c}
x,\text{\ \ \ \ \ \ \ \ \ \ }x\in C_{1,r_{2}}^{C^{\ast }}\text{ \ } \\ 
1-\ln 3,\text{\ \ }x\notin C_{1,r_{2}}^{C^{\ast }}\text{ \ }%
\end{array}%
\right. .
\end{equation*}%
Clearly, $T$ satisfies the conditions (2.4) and (2.5), and we derive that
the circle $C_{1,r_{2}}^{C^{\ast }}$ is a fixed circle of $T.$
\end{example}

Now, we introduce an example of a self-mapping which satisfies the condition
(2.4) and does not satisfy the condition (2.5).

\begin{example}
Consider the $C^{\ast }-$algebra valued metric space $\left( X,%
\mathbb{R}
^{2},d\right) $ given in Example 3, the circle $C_{\left( \frac{-1}{2}%
,0\right) ,\left( 1,1\right) }^{C^{\ast }}$ given in Example 7 and the
self-mapping $T:X\rightarrow X$ given in Example 12. Then, $T$ satisfies the
condition (2.4) and does not satisfy the condition (2.5). Notice that the
circle $C_{\left( \frac{-1}{2},0\right) ,\left( 1,1\right) }^{C^{\ast }}$ is
not a fixed circle of $T.$
\end{example}

Now, we furnish an example in order to evidence that Teorem 6 fail out, if
the condition (2.5) is satisfied but the condition (2.4) is not.

\begin{example}
Consider the $C^{\ast }-$algebra valued metric space $\left( 
\mathbb{R}
,M_{2}\left( 
\mathbb{R}
\right) ,d\right) $ given in Example 4 and the circle $C_{3,r}^{C^{\ast }}$
given in Example 8. Let us define the self-mapping $T:%
\mathbb{R}
\rightarrow 
\mathbb{R}
$ as%
\begin{equation*}
Tx=\left\{ 
\begin{array}{c}
3,\text{\ \ }x\in C_{3,r}^{C^{\ast }}\text{ \ } \\ 
0,\text{ \ \ }x\notin C_{3,r}^{C^{\ast }}%
\end{array}%
\right. .
\end{equation*}%
It is not hard to prove that $T$ satisfies the condition (2.5) and does not
satisfy the condition (2.4). Notice that the circle $C_{3,r}^{C^{\ast }}$ is
not a fixed circle of $T.$
\end{example}

The following theorem presents a new solution for fixed-circle problem
obtained by the help of the inequality (1.3) in Theorem 3.

\begin{theorem}
Let $\left( X,\mathbb{A},d\right) $ be a $C^{\ast }-$algebra valued metric
space, $C_{x_{0},r}^{C^{\ast }}$ be any circle on $X$ and the mapping $%
\varphi $ be as in (2.1). If $T$ is a self-mapping defined on $X$ satisfying
the conditions%
\begin{equation}
d\left( x,Tx\right) \preceq \varphi \left( x\right) -\varphi \left( Tx\right)
\end{equation}%
and%
\begin{equation}
r\preceq A^{\ast }d\left( x,Tx\right) A+d\left( Tx,x_{0}\right)
\end{equation}%
for all $x\in C_{x_{0},r}^{C^{\ast }}$ and some $A\in \mathbb{A}$ with $%
\left\Vert A\right\Vert <1,$ then the circle $C_{x_{0},r}^{C^{\ast }}$ is a
fixed circle of $T.$
\end{theorem}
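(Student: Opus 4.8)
The plan is to mirror the proofs of Theorem 5 and Theorem 6: fix an arbitrary point $x\in C_{x_{0},r}^{C^{\ast }}$, so that $d\left( x,x_{0}\right) =r$, and aim to show $d\left( x,Tx\right) =\theta $, which forces $Tx=x$ and hence makes $C_{x_{0},r}^{C^{\ast }}$ a fixed circle of $T$.

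First I would exploit condition (2.8) exactly as in the earlier arguments. Substituting $\varphi \left( x\right) =d\left( x,x_{0}\right) =r$ and $\varphi \left( Tx\right) =d\left( Tx,x_{0}\right) $ gives $d\left( x,Tx\right) \preceq r-d\left( Tx,x_{0}\right) $. Adding $d\left( Tx,x_{0}\right) $ to both sides, which is legitimate by property (iii) of the partial order $\preceq $, yields $d\left( x,Tx\right) +d\left( Tx,x_{0}\right) \preceq r$. Next I would bring in condition (2.9), namely $r\preceq A^{\ast }d\left( x,Tx\right) A+d\left( Tx,x_{0}\right) $. Chaining these two estimates and cancelling the common term $d\left( Tx,x_{0}\right) $ (again by property (iii)) produces the key relation $d\left( x,Tx\right) \preceq A^{\ast }d\left( x,Tx\right) A$.

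The decisive step, and the one that distinguishes this theorem from Theorems 5 and 6, is to deduce $d\left( x,Tx\right) =\theta $ from the self-referential inequality $d\left( x,Tx\right) \preceq A^{\ast }d\left( x,Tx\right) A$. This is where the contractivity hypothesis $\left\Vert A\right\Vert <1$ enters. I would pass from the order relation to the norm: property (vii) gives $\left\Vert d\left( x,Tx\right) \right\Vert \leq \left\Vert A^{\ast }d\left( x,Tx\right) A\right\Vert $, and submultiplicativity of the norm together with property (i), $\left\Vert A^{\ast }\right\Vert =\left\Vert A\right\Vert $, bounds the right-hand side by $\left\Vert A\right\Vert ^{2}\left\Vert d\left( x,Tx\right) \right\Vert $. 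Thus $\left\Vert d\left( x,Tx\right) \right\Vert \leq \left\Vert A\right\Vert ^{2}\left\Vert d\left( x,Tx\right) \right\Vert $ with $\left\Vert A\right\Vert ^{2}<1$, which is impossible unless $\left\Vert d\left( x,Tx\right) \right\Vert =0$. Hence $d\left( x,Tx\right) =\theta $, giving $Tx=x$ for every $x\in C_{x_{0},r}^{C^{\ast }}$, as required. I expect the two order-cancellation manipulations to be entirely routine; the only genuinely substantive point is recognizing that the positivity of $d\left( x,Tx\right) $ combined with $\left\Vert A\right\Vert <1$ is exactly what collapses the iterated estimate to zero.
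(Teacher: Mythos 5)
Your argument is correct and follows essentially the same route as the paper's proof: both derive $d\left( x,Tx\right) \preceq A^{\ast }d\left( x,Tx\right) A$ by combining the two hypotheses on the circle and then pass to norms, using $\left\Vert A^{\ast }\right\Vert =\left\Vert A\right\Vert $ and submultiplicativity to force $\left\Vert d\left( x,Tx\right) \right\Vert =0$. The only cosmetic difference is that the paper frames the final step as a contradiction under the assumption $x\neq Tx$, while you conclude $d\left( x,Tx\right) =\theta $ directly; the underlying computation is identical.
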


\begin{proof}
Let $x$ be any point in the circle $C_{x_{0},r}^{C^{\ast }}.$ Let $x\neq Tx.$
Then, using the (2.6), (2.1) and (2.7) we get 
\begin{eqnarray*}
0 &\preceq &d\left( x,Tx\right) \preceq \varphi \left( x\right) -\varphi
\left( Tx\right) \\
&=&d\left( x,x_{0}\right) -d\left( Tx,x_{0}\right) \\
&=&r-d\left( Tx,x_{0}\right) \\
&\preceq &A^{\ast }d\left( x,Tx\right) A+d\left( Tx,x_{0}\right) -d\left(
Tx,x_{0}\right) \\
&=&A^{\ast }d\left( x,Tx\right) A
\end{eqnarray*}%
and so%
\begin{eqnarray*}
0 &\leq &\left\Vert d\left( x,Tx\right) \right\Vert \leq \left\Vert A^{\ast
}d\left( x,Tx\right) A\right\Vert \\
&\leq &\left\Vert A^{\ast }\right\Vert \left\Vert d\left( x,Tx\right)
\right\Vert \left\Vert A\right\Vert \\
&=&\left\Vert A\right\Vert ^{2}\left\Vert d\left( x,Tx\right) \right\Vert \\
&<&\left\Vert d\left( x,Tx\right) \right\Vert .
\end{eqnarray*}%
This yields a contradiction with our assumption. Therefore, we conclude that 
$x=Tx$ for all $x\in C_{x_{0},r}^{C^{\ast }}$ and hence $C_{x_{0},r}^{C^{%
\ast }}$ is a fixed circle of $T.$
\end{proof}

The following example support Theorem 7.

\begin{example}
Consider the $C^{\ast }-$algebra valued metric space $\left( 
\mathbb{R}
,M_{2}\left( 
\mathbb{R}
\right) ,d\right) $ for $\alpha =2$ and the circle $C_{1,r_{2}}^{C^{\ast }}$
given in Example 9 and the self-mapping $T:%
\mathbb{R}
\rightarrow 
\mathbb{R}
$ given in Example 13. Then, with a direct computation it can be seen that
the self-mapping $T$ satisfies the conditions (2.6) and (2.7) for $A=\left[ 
\begin{array}{cc}
\frac{1}{5} & 0 \\ 
0 & \frac{-3}{5}%
\end{array}%
\right] \in M_{2}\left( 
\mathbb{R}
\right) $ with $\left\Vert A\right\Vert =\frac{3}{5}<1$. Note that the
circle $C_{1,r_{2}}^{C^{\ast }}$ is a fixed circle of $T.$
\end{example}

Now, we give an example of a self-mapping which satisfies the condition
(2.6) and does not satisfy the condition (2.7).

\begin{example}
Consider the $C^{\ast }-$algebra valued metric space $\left( 
\mathbb{R}
,M_{2}\left( 
\mathbb{R}
\right) ,d\right) $ given in Example 4, the circle $C_{3,r}^{C^{\ast }}$
given in Example 8 and the self-mapping $T:%
\mathbb{R}
\rightarrow 
\mathbb{R}
$ given in Example 15. It can be easily seen $T$ satisfies the condition
(2.6). But since there does not exist $A\in M_{2}\left( 
\mathbb{R}
\right) $ with $\left\Vert A\right\Vert <1$ such that%
\begin{equation*}
\left[ 
\begin{array}{cc}
1 & 0 \\ 
0 & 1%
\end{array}%
\right] \preceq A^{\ast }\left[ 
\begin{array}{cc}
1 & 0 \\ 
0 & 1%
\end{array}%
\right] A,
\end{equation*}%
the condition (2.7) does not hold. Obviously, the circle $C_{3,r}^{C^{\ast
}} $ is not a fixed circle of $T.$
\end{example}

Now, we present an example in order to show that whether the Theorem 7 is
valid, if the condition (2.7) is satisfied but the condition (2.6) is not.

\begin{example}
Consider the $C^{\ast }-$algebra valued metric space $\left( 
\mathbb{R}
,M_{2}\left( 
\mathbb{R}
\right) ,d\right) $ given in Example 2 for $\alpha =3$ and the circle $%
C_{0,r_{1}}^{C^{\ast }}$ given in Example 6. Let us define the self-mapping $%
T:%
\mathbb{R}
\rightarrow 
\mathbb{R}
$ as%
\begin{equation*}
Tx=\left\{ 
\begin{array}{c}
2,\text{ \ \ \ }x\in C_{0,r_{1}}^{C^{\ast }} \\ 
-5,\text{ \ }x\notin C_{0,r_{1}}^{C^{\ast }}%
\end{array}%
\right. .
\end{equation*}%
It is not hard to show that $T$ satisfies the condition (2.7) for $A=\left[ 
\begin{array}{cc}
\frac{-1}{2} & 0 \\ 
0 & \frac{1}{2}%
\end{array}%
\right] \in M_{2}\left( 
\mathbb{R}
\right) $ with $\left\Vert A\right\Vert =\frac{1}{2}<1$ and does not satisfy
the condition (2.6). Notice that the circle $C_{0,r_{1}}^{C^{\ast }}$ is not
a fixed circle of $T.$
\end{example}

We finish this section by give our last existence theorem for fixed circles
of a self-mapping on a $C^{\ast }-$algebra valued metric space as follows:

\begin{theorem}
Let $\left( X,\mathbb{A},d\right) $ be a $C^{\ast }-$algebra valued metric
space, $C_{x_{0},r}^{C^{\ast }}$ be any circle on $X$ and the mapping $%
\varphi $ be as in (2.1). $T$ is a self-mapping defined on $X$ satisfying
the condition%
\begin{equation}
A^{\ast }d\left( x,Tx\right) A\preceq \varphi \left( x\right) -\varphi
\left( Tx\right)
\end{equation}%
for all $x\in X$ where $A\in \mathbb{A}$ is an invertible element and $%
\left\Vert A^{-1}\right\Vert <1$ if and only if $T$ fixes the circle $%
C_{x_{0},r}^{C^{\ast }}$ and $T=I_{X}.$
\end{theorem}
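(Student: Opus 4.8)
The plan is to prove the two implications separately. The substantive direction is the forward one: assuming $T$ satisfies (2.8) for all $x\in X$ with $A$ invertible and $\left\Vert A^{-1}\right\Vert <1$, I would show $T=I_{X}$, which in particular forces $T$ to fix the circle $C_{x_{0},r}^{C^{\ast }}$. The reverse direction will reduce to a direct verification together with the exhibition of one admissible $A$.

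For the forward direction I would fix an arbitrary $x\in X$ and aim to prove $d\left( x,Tx\right) =\theta $. First I would bound the right-hand side of (2.8) by the triangle inequality of the metric: since $d\left( x,x_{0}\right) \preceq d\left( x,Tx\right) +d\left( Tx,x_{0}\right) $, adding $-d\left( Tx,x_{0}\right) $ to both sides (which preserves $\preceq $ by property (iii) of the partial order, as $d\left( Tx,x_{0}\right) \in \mathbb{A}_{h}$) gives $\varphi \left( x\right) -\varphi \left( Tx\right) =d\left( x,x_{0}\right) -d\left( Tx,x_{0}\right) \preceq d\left( x,Tx\right) $. Combining this with (2.8) yields the operator inequality
\[
A^{\ast }d\left( x,Tx\right) A\preceq d\left( x,Tx\right) .
\]

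The key manipulation, which I expect to be the crux of the argument, is to \emph{undo} the conjugation by $A$. Since $A$ is invertible, $A^{\ast }$ is invertible with $\left( A^{\ast }\right) ^{-1}=\left( A^{-1}\right) ^{\ast }$ by property (ii), so I would conjugate both sides of the above inequality by $z=A^{-1}$ using property (vi). Because $\left( A^{-1}\right) ^{\ast }A^{\ast }=\left( AA^{-1}\right) ^{\ast }=I$ and $AA^{-1}=I$, the left-hand side collapses to $d\left( x,Tx\right) $, producing
\[
d\left( x,Tx\right) \preceq \left( A^{-1}\right) ^{\ast }d\left( x,Tx\right) A^{-1}.
\]
Taking norms and applying property (vii) (monotonicity of the norm on positive elements), then sub-multiplicativity of the norm together with $\left\Vert \left( A^{-1}\right) ^{\ast }\right\Vert =\left\Vert A^{-1}\right\Vert $ from property (i), I obtain $\left\Vert d\left( x,Tx\right) \right\Vert \leq \left\Vert A^{-1}\right\Vert ^{2}\left\Vert d\left( x,Tx\right) \right\Vert $. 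If $d\left( x,Tx\right) \neq \theta $, then dividing by $\left\Vert d\left( x,Tx\right) \right\Vert >0$ forces $\left\Vert A^{-1}\right\Vert \geq 1$, contradicting the hypothesis. Hence $d\left( x,Tx\right) =\theta $, i.e. $Tx=x$; as $x\in X$ was arbitrary, $T=I_{X}$, and in particular $C_{x_{0},r}^{C^{\ast }}$ is a fixed circle of $T$.

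For the reverse direction I would assume $T=I_{X}$ (so that $T$ trivially fixes every circle). Then $d\left( x,Tx\right) =d\left( x,x\right) =\theta $ for every $x\in X$, so the left-hand side of (2.8) is $A^{\ast }\theta A=\theta $ while the right-hand side is $d\left( x,x_{0}\right) -d\left( x,x_{0}\right) =\theta $, and thus $\theta \preceq \theta $ holds. It only remains to exhibit an admissible $A$: since $\mathbb{A}$ is unital with $\left\Vert I\right\Vert =1$, the element $A=2I$ is invertible with $A^{-1}=\tfrac{1}{2}I$ and $\left\Vert A^{-1}\right\Vert =\tfrac{1}{2}<1$, which completes the equivalence. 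The only delicate point throughout is the algebraic cancellation in the conjugation step; the rest is a routine application of the listed $C^{\ast }$-algebra properties.
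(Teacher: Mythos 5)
Your proposal is correct and follows essentially the same route as the paper: bound $\varphi(x)-\varphi(Tx)$ by $d(x,Tx)$ via the triangle inequality, undo the conjugation by $A$ to get $d(x,Tx)\preceq (A^{-1})^{\ast}d(x,Tx)A^{-1}$, and derive a norm contradiction from $\left\Vert A^{-1}\right\Vert<1$. Your write-up is in fact slightly more careful than the paper's (explicitly invoking property (vi) for the conjugation step, using submultiplicativity as an inequality rather than an equality, and exhibiting a concrete admissible $A$ in the converse), but these are refinements, not a different argument.
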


\begin{proof}
Suppose that $T$ be a self-mapping defined on $X$ satisfying the condition
(2.8). Let $x$ be any point in $X.$ Let $x\neq Tx.$ Then, using the (2.8),
(2.1) and (iii) given in Definition 2 we get 
\begin{eqnarray*}
A^{\ast }d\left( x,Tx\right) A &\preceq &\varphi \left( x\right) -\varphi
\left( Tx\right) \\
&=&d\left( x,x_{0}\right) -d\left( Tx,x_{0}\right) \\
&\preceq &d\left( x,Tx\right) +d\left( Tx,x_{0}\right) -d\left(
Tx,x_{0}\right) \\
&=&d\left( x,Tx\right)
\end{eqnarray*}%
and so%
\begin{equation*}
d\left( x,Tx\right) \preceq \left( A^{\ast }\right) ^{-1}d\left( x,Tx\right)
A^{-1}=\left( A^{-1}\right) ^{\ast }d\left( x,Tx\right) A^{-1}.
\end{equation*}%
It follows that%
\begin{eqnarray*}
\left\Vert d\left( x,Tx\right) \right\Vert &\leq &\left\Vert \left(
A^{-1}\right) ^{\ast }d\left( x,Tx\right) A^{-1}\right\Vert \\
&=&\left\Vert \left( A^{-1}\right) ^{\ast }\right\Vert \left\Vert d\left(
x,Tx\right) \right\Vert \left\Vert A^{-1}\right\Vert \\
&=&\left\Vert A^{-1}\right\Vert ^{2}\left\Vert d\left( x,Tx\right)
\right\Vert \\
&<&\left\Vert d\left( x,Tx\right) \right\Vert .
\end{eqnarray*}%
But it is impossible. Hence, we can write $x=Tx$ for all $x\in X$ and $%
T=I_{X}.$

Conversely, suppose that $T$ fixes the circle $C_{x_{0},r}^{C^{\ast }}$ and $%
T=I_{X}.$ Then, since $Tx=x$ for all $x\in X,$ the condition (2.8) holds for
any invertible element $A\in \mathbb{A}$ with $\left\Vert A^{-1}\right\Vert
<1.$ This completes the proof.
\end{proof}

\begin{remark}
Theorem 8 says that if a self-mapping fixes a circle by satisfying the
conditions (2.2) and (2.3) (or the conditions (2.4) and (2.5)), but does not
satisfy the condition (2.8), then the self-mapping is different from the
identity map.
\end{remark}

\subsection{The uniqueness of fixed circles}

In this part, we discuss the uniqueness of fixed circles in the existence
theorems proved in subsection 2.1.

Before giving our uniqueness theorems, we emphasize that the fixed circles $%
C_{x_{0},r}^{C^{\ast }}$ in Theorem 5, Theorem 6 and Theorem 7 is not unique
with the following result.

\begin{proposition}
Let $\left( X,\mathbb{A},d\right) $ be a $C^{\ast }-$algebra valued metric
space and $C_{x_{1},r_{1}}^{C^{\ast }},C_{x_{2},r_{2}}^{C^{\ast
}},...,C_{x_{n},r_{n}}^{C^{\ast }}$ be any given circles. Then, there exists
at least one self-mapping $T$ of $X$ such that $T$ such that $T$ fixes all
the circles $C_{x_{1},r_{1}}^{C^{\ast }},$ $C_{x_{2},r_{2}}^{C^{\ast }},$ $%
...,C_{x_{n},r_{n}}^{C^{\ast }}.$
\end{proposition}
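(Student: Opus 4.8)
The plan is to prove this by an explicit, essentially canonical construction: since ``fixing all the given circles'' constrains $T$ only on the union of those circles and says nothing about its behaviour elsewhere, I would simply force $T$ to be the identity on that union and let it act arbitrarily (as a constant) on the rest of $X$.

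First I would gather the prescribed circles into a single set
\[
\mathcal{C}=\bigcup_{i=1}^{n}C_{x_{i},r_{i}}^{C^{\ast }}.
\]
Since $X$ is nonempty, I may fix once and for all a point $z_{0}\in X$, and then define the self-mapping $T:X\rightarrow X$ by
\[
Tx=\begin{cases} x, & x\in \mathcal{C},\\ z_{0}, & x\notin \mathcal{C}. \end{cases}
\]
This is a well-defined self-mapping of $X$, because every $x\in X$ receives exactly one image and that image (either $x$ itself or the fixed point $z_{0}$) lies in $X$.

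Next I would verify the fixing property directly from Definition~4. Fixing an arbitrary index $i\in \{1,\dots ,n\}$ and an arbitrary point $x\in C_{x_{i},r_{i}}^{C^{\ast }}$, the inclusion $C_{x_{i},r_{i}}^{C^{\ast }}\subseteq \mathcal{C}$ gives $x\in \mathcal{C}$, so $Tx=x$ by construction. As $x$ ranges over all of $C_{x_{i},r_{i}}^{C^{\ast }}$, this says exactly that $C_{x_{i},r_{i}}^{C^{\ast }}$ is a fixed circle of $T$; and since $i$ was arbitrary, the single mapping $T$ simultaneously fixes $C_{x_{1},r_{1}}^{C^{\ast }},\dots ,C_{x_{n},r_{n}}^{C^{\ast }}$, which is the assertion.

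I expect no real obstacle here, since the argument reduces to checking that a piecewise-defined map is a legitimate element of $X^{X}$ and then reading off the definition of a fixed circle. The only subtlety worth flagging is interpretive rather than technical: to have this proposition genuinely witness the \emph{non-uniqueness} of fixed circles advertised in the surrounding text (that is, to produce a $T\neq I_{X}$ that nonetheless fixes all the prescribed circles), one should, whenever $\mathcal{C}\neq X$, choose $z_{0}$ and a point $x_{\ast }\notin \mathcal{C}$ so that $Tx_{\ast }=z_{0}\neq x_{\ast }$; for the bare existence statement as written, however, the displayed construction already suffices.
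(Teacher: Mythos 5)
Your construction is exactly the paper's: the paper defines $Tx=x$ on $\bigcup_{i=1}^{n}C_{x_{i},r_{i}}^{C^{\ast}}$ and $Tx=\alpha$ (a constant) off that union, so your proposal is correct and takes essentially the same approach. The only differences are cosmetic: the paper additionally insists that the constant $\alpha$ satisfy $d(\alpha ,x_{i})\neq r_{i}$ for each $i$ and then appeals to Theorem 5 to conclude, whereas you take an arbitrary $z_{0}$ and read the fixing property directly off Definition 4 --- which is if anything cleaner, since it sidesteps the (unaddressed) question of whether such an $\alpha$ exists when the circles cover $X$.
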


\begin{proof}
Let the self-mapping $T:X\rightarrow X$ be defined as%
\begin{equation*}
Tx=\left\{ 
\begin{array}{c}
x,x\in \overset{n}{\underset{i=1}{\cup }}C_{x_{i},r_{i}}^{C^{\ast }} \\ 
\alpha ,x\notin \overset{n}{\underset{i=1}{\cup }}C_{x_{i},r_{i}}^{C^{\ast }}%
\end{array}%
\right.
\end{equation*}%
where $\alpha \in X$ is a constant satisfying $d\left( \alpha ,x_{i}\right)
\neq r_{i}$ and the mapping $\varphi _{i}:X\rightarrow \left[ 0,\infty
\right) $ be defined as%
\begin{equation*}
\varphi _{i}\left( x\right) =d\left( x,x_{i}\right)
\end{equation*}%
for $i=1,2,...n.$ Then it is not hard to verify that the conditions (2.2)
and (2.3) in Theorem 5 are satisfied for the circles $C_{x_{1},r_{1}}^{C^{%
\ast }},C_{x_{2},r_{2}}^{C^{\ast }},...,C_{x_{n},r_{n}}^{C^{\ast }}.$
Therefore, all the circles $C_{x_{1},r_{1}}^{C^{\ast
}},C_{x_{2},r_{2}}^{C^{\ast }},...,C_{x_{n},r_{n}}^{C^{\ast }}$ are fixed
circles of the self-mapping $T$ by Theorem 5.
\end{proof}

Firstly, we focus on the uniqueness of fixed circles in Theorem 5 using the
inequality (1.1) in Theorem 1 in the following theorem.

\begin{theorem}
Let $\left( X,\mathbb{A},d\right) $ be a $C^{\ast }-$algebra valued metric
space, $C_{x_{0},r}^{C^{\ast }}$ be any circle on $X$ and the $T$ be a
self-mapping satisfying the conditions (2.2) and (2.3) given in Theorem 5.
If $T$ satisfies the contraction condition%
\begin{equation}
d\left( Tx,Ty\right) \preceq A^{\ast }d\left( x,y\right) A
\end{equation}%
for all $x\in C_{x_{0},r}^{C^{\ast }}$ , $y\in X-C_{x_{0},r}^{C^{\ast }}$
and some $A\in \mathbb{A}$ with $\left\Vert A\right\Vert <1,$ then the
circle $C_{x_{0},r}^{C^{\ast }}$ is unique fixed circle of $T.$
\end{theorem}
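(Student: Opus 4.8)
The plan is to split the statement into two pieces: that $C_{x_{0},r}^{C^{\ast }}$ is a fixed circle at all, and that it is the only one. The first piece costs nothing, since $T$ is assumed to satisfy (2.2) and (2.3); Theorem 5 then already yields $Tx=x$ for every $x\in C_{x_{0},r}^{C^{\ast }}$, so $C_{x_{0},r}^{C^{\ast }}$ is a fixed circle. Hence every bit of genuine work goes into uniqueness, and the contraction hypothesis (2.9) is exactly the instrument I would use there.

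For uniqueness I would argue by contradiction. Suppose $C_{x_{1},\rho }^{C^{\ast }}$ is a second fixed circle of $T$ with $C_{x_{1},\rho }^{C^{\ast }}\neq C_{x_{0},r}^{C^{\ast }}$. Since the two circles differ as subsets of $X$, I would pick a point $y\in C_{x_{1},\rho }^{C^{\ast }}$ that lies outside $C_{x_{0},r}^{C^{\ast }}$, that is, $y\in X-C_{x_{0},r}^{C^{\ast }}$; because $C_{x_{1},\rho }^{C^{\ast }}$ is fixed, $Ty=y$. Choosing in addition any $x\in C_{x_{0},r}^{C^{\ast }}$ (so $Tx=x$), the pair $(x,y)$ meets precisely the configuration for which (2.9) is assumed, giving
\begin{equation*}
d\left( x,y\right) =d\left( Tx,Ty\right) \preceq A^{\ast }d\left( x,y\right) A.
\end{equation*}

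The decisive step is then to pass to norms. Using the monotonicity in property (vii), the submultiplicativity of the norm, and $\left\Vert A^{\ast }\right\Vert =\left\Vert A\right\Vert $ from property (i), I obtain
\begin{equation*}
\left\Vert d\left( x,y\right) \right\Vert \leq \left\Vert A^{\ast }d\left( x,y\right) A\right\Vert \leq \left\Vert A\right\Vert ^{2}\left\Vert d\left( x,y\right) \right\Vert <\left\Vert d\left( x,y\right) \right\Vert ,
\end{equation*}
where the strict inequality is forced by $\left\Vert A\right\Vert <1$. This compels $\left\Vert d\left( x,y\right) \right\Vert =0$, hence $d\left( x,y\right) =\theta$ and $x=y$; but $x\in C_{x_{0},r}^{C^{\ast }}$ while $y\notin C_{x_{0},r}^{C^{\ast }}$, a contradiction. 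Therefore no second fixed circle can exist, and $C_{x_{0},r}^{C^{\ast }}$ is the unique fixed circle of $T$.

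The step I expect to require the most care is the selection of $y$: the contraction (2.9) is posited only for $x\in C_{x_{0},r}^{C^{\ast }}$ and $y\in X-C_{x_{0},r}^{C^{\ast }}$, so the argument genuinely needs a point of the competing circle that falls outside $C_{x_{0},r}^{C^{\ast }}$. Distinctness of the two circles as sets supplies such a point whenever the competing circle is not contained in $C_{x_{0},r}^{C^{\ast }}$, and I would record this as the operative case, noting that a fixed circle meeting $X-C_{x_{0},r}^{C^{\ast }}$ is the only way a genuinely different fixed circle can present itself, so that (2.9) is indeed applicable at the chosen $y$.
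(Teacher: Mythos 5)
Your proof follows essentially the same route as the paper's: assume a second fixed circle, apply the contraction (2.9) to a point on each circle, pass to norms, and derive a contradiction from $\left\Vert A\right\Vert <1$. In fact you are more careful than the paper in two respects --- you select $y$ on the competing circle but outside $C_{x_{0},r}^{C^{\ast }}$ so that (2.9) is actually applicable (the paper takes arbitrary points $a,b$, for which the stated hypothesis need not apply), and you close the argument by deducing $x=y$ and contradicting the choice of $y$ rather than tacitly assuming $d\left( a,b\right) \neq \theta $ --- though, as you yourself flag, both arguments leave untreated the degenerate case of a competing circle entirely contained in $C_{x_{0},r}^{C^{\ast }}$.
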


\begin{proof}
Assume that $C_{x_{1},\delta }^{C^{\ast }}$ is another fixed circle of $T.$
Let $a$ and $b$ be any points in $C_{x_{0},r}^{C^{\ast }}$ and $%
C_{x_{1},\delta }^{C^{\ast }},$ respectively. Then, we get by (2.9)%
\begin{equation*}
d\left( a,b\right) =d\left( Ta,Tb\right) \preceq A^{\ast }d\left( a,b\right)
A
\end{equation*}%
and so,%
\begin{equation*}
\left\Vert d\left( a,b\right) \right\Vert \leq \left\Vert A^{\ast }d\left(
a,b\right) A\right\Vert \leq \left\Vert A\right\Vert ^{2}\left\Vert d\left(
a,b\right) \right\Vert <\left\Vert d\left( a,b\right) \right\Vert .
\end{equation*}%
But this is impossible. Hence, the self-mapping $T$ fixes only circle $%
C_{x_{0},r}^{C^{\ast }}.$
\end{proof}

Now, we determine the uniqueness condition for the fixed circles in Theorem
6 using the condition (1.2) in Theorem 2.

\begin{theorem}
Let $\left( X,\mathbb{A},d\right) $ be a $C^{\ast }-$algebra valued metric
space, $C_{x_{0},r}^{C^{\ast }}$ be any circle on $X$ and the $T$ be a
self-mapping satisfying the conditions (2.4) and (2.5) given in Theorem 6.
If $T$ satisfies the contraction condition%
\begin{equation}
d\left( Tx,Ty\right) \preceq A\left( d\left( Tx,y\right) +d\left(
Ty,x\right) \right)
\end{equation}%
for all $x\in C_{x_{0},r}^{C^{\ast }}$ , $y\in X-C_{x_{0},r}^{C^{\ast }}$
and some $A\in \mathbb{A}_{+}^{\prime }$ with $\left\Vert A\right\Vert <%
\frac{1}{2},$ then the circle $C_{x_{0},r}^{C^{\ast }}$ is unique fixed
circle of $T.$
\end{theorem}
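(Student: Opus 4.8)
The plan is to mirror the uniqueness argument of Theorem 11, replacing the Banach-type estimate by the Kannan-type contraction (2.10) borrowed from Theorem 2. Since $T$ already satisfies (2.4) and (2.5), Theorem 6 guarantees that $C_{x_{0},r}^{C^{\ast }}$ is a fixed circle of $T$; it remains only to rule out a second one.

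First I would argue by contradiction: suppose $C_{x_{1},\delta }^{C^{\ast }}$ is a fixed circle of $T$ different from $C_{x_{0},r}^{C^{\ast }}$. I pick an arbitrary point $a\in C_{x_{0},r}^{C^{\ast }}$ and a point $b\in C_{x_{1},\delta }^{C^{\ast }}$; since the two circles are distinct I may take $b\in X-C_{x_{0},r}^{C^{\ast }}$, which is exactly the regime in which (2.10) is assumed to hold. Because both circles are fixed, $Ta=a$ and $Tb=b$. Next I would substitute $x=a$ and $y=b$ into (2.10), using $Ta=a$, $Tb=b$ together with the symmetry axiom (ii) of Definition 2, to obtain
\[
d\left( a,b\right) =d\left( Ta,Tb\right) \preceq A\left( d\left( Ta,b\right) +d\left( Tb,a\right) \right) =A\left( d\left( a,b\right) +d\left( a,b\right) \right) =2Ad\left( a,b\right) .
\]
Here the hypothesis $A\in \mathbb{A}_{+}^{\prime }$ is essential: since $A$ commutes with $d\left( a,b\right) $ and both are positive, the product $Ad\left( a,b\right) $ is again a positive self-adjoint element, so the relation $\theta \preceq d\left( a,b\right) \preceq 2Ad\left( a,b\right) $ is legitimate in $\mathbb{A}_{h}$.

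Then, applying property (vii) of the preliminaries and submultiplicativity of the norm, I would pass to norms:
\[
\left\Vert d\left( a,b\right) \right\Vert \leq \left\Vert 2Ad\left( a,b\right) \right\Vert \leq 2\left\Vert A\right\Vert \left\Vert d\left( a,b\right) \right\Vert <\left\Vert d\left( a,b\right) \right\Vert ,
\]
where the last strict inequality uses $\left\Vert A\right\Vert <\frac{1}{2}$. This is impossible unless $\left\Vert d\left( a,b\right) \right\Vert =0$, that is $d\left( a,b\right) =\theta $ and $a=b$. As $a$ ranged over all of $C_{x_{0},r}^{C^{\ast }}$, this forces $b\in C_{x_{0},r}^{C^{\ast }}$, contradicting $b\in X-C_{x_{0},r}^{C^{\ast }}$. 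Hence no distinct second fixed circle can exist, and $C_{x_{0},r}^{C^{\ast }}$ is the unique fixed circle of $T$.

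The step I expect to be the main obstacle is the bookkeeping around the domain restriction of (2.10): one must ensure the chosen point $b$ genuinely lies in $X-C_{x_{0},r}^{C^{\ast }}$ before invoking the contraction, and one must use $A\in \mathbb{A}_{+}^{\prime }$ to guarantee that $Ad\left( a,b\right) $ is self-adjoint, so that the order relation and property (vii) actually apply. Once these two points are secured, the norm estimate is routine and follows the same pattern already used in Theorems 7 and 11.
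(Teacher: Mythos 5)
Your argument is correct and follows essentially the same route as the paper's own proof: assume a second fixed circle, take $a\in C_{x_{0},r}^{C^{\ast }}$ and $b$ on the other circle, apply the contraction (2.10) with $Ta=a$, $Tb=b$, and pass to norms via property (vii) to reach $\left\Vert d\left( a,b\right) \right\Vert <\left\Vert d\left( a,b\right) \right\Vert $. Your version is in fact slightly more careful than the paper's, since you explicitly arrange $b\in X-C_{x_{0},r}^{C^{\ast }}$ so that (2.10) applies and you justify why $Ad\left( a,b\right) $ is a legitimate upper bound in $\mathbb{A}_{h}$.
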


\begin{proof}
Assume that $C_{x_{1},\delta }^{C^{\ast }}$ is another fixed circle of $T.$
Let $a$ and $b$ be any points in $C_{x_{0},r}^{C^{\ast }}$ and $%
C_{x_{1},\delta }^{C^{\ast }},$ respectively. Then, we get by (2.10)%
\begin{equation*}
d\left( a,b\right) =d\left( Ta,Tb\right) \preceq A\left( d\left( Ta,b\right)
+d\left( Tb,a\right) \right) ,
\end{equation*}%
so that%
\begin{eqnarray*}
\left\Vert d\left( a,b\right) \right\Vert &\leq &\left\Vert A\left( d\left(
Ta,b\right) +d\left( Tb,a\right) \right) \right\Vert \\
&\leq &\left\Vert A\right\Vert \left\Vert d\left( a,b\right) +d\left(
b,a\right) \right\Vert \\
&=&2\left\Vert A\right\Vert \left\Vert d\left( a,b\right) \right\Vert \\
&<&\left\Vert d\left( a,b\right) \right\Vert
\end{eqnarray*}%
which is a contradiction which means that $a=b.$ This shows that the
self-mapping $T$ fixes only circle $C_{x_{0},r}^{C^{\ast }}.$
\end{proof}

Finally, we state our last uniqueness theorem for the fixed circles in
Theorem 7 using the condition (1.4) in Theorem 4.

\begin{theorem}
Let $\left( X,\mathbb{A},d\right) $ be a $C^{\ast }-$algebra valued metric
space, $C_{x_{0},r}^{C^{\ast }}$ be any circle on $X$ and the $T$ be a
self-mapping satisfying the conditions (2.6) and (2.7) given in Theorem 7.
If $T$ satisfies the contraction condition that there exists $u\left(
x,y\right) \in \left\{ d\left( x,y\right) ,d\left( x,Tx\right) ,d\left(
y,Ty\right) ,d\left( x,Ty\right) ,d\left( y,Tx\right) \right\} $ such that%
\begin{equation}
d\left( Tx,Ty\right) \preceq A^{\ast }u\left( x,y\right) A
\end{equation}%
for all $x\in C_{x_{0},r}^{C^{\ast }}$ , $y\in X-C_{x_{0},r}^{C^{\ast }}$
and some $A\in \mathbb{A}$ with $\left\Vert A\right\Vert <1,$ then the
circle $C_{x_{0},r}^{C^{\ast }}$ is unique fixed circle of $T.$
\end{theorem}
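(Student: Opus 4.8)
The plan is to mirror closely the arguments used for Theorem 10 and Theorem 11, exploiting the fact that on a fixed circle every point is a fixed point of $T$, which collapses the five admissible choices of $u(x,y)$ into only two essentially distinct cases. First I would note that, since $T$ satisfies the conditions (2.6) and (2.7), Theorem 7 already guarantees that $C_{x_{0},r}^{C^{\ast }}$ is a fixed circle of $T$; hence only the uniqueness assertion remains to be established.

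To prove uniqueness, I would suppose toward a contradiction that $C_{x_{1},\delta }^{C^{\ast }}$ is another fixed circle of $T$ and choose arbitrary points $a\in C_{x_{0},r}^{C^{\ast }}$ and $b\in C_{x_{1},\delta }^{C^{\ast }}$. Since both circles are fixed, we have $Ta=a$ and $Tb=b$, so applying the contractive hypothesis (2.11) with $x=a$ and $y=b$ yields
\[
d\left( a,b\right) =d\left( Ta,Tb\right) \preceq A^{\ast }u\left( a,b\right) A
\]
for some $u\left( a,b\right) \in \left\{ d\left( a,b\right) ,d\left( a,Ta\right) ,d\left( b,Tb\right) ,d\left( a,Tb\right) ,d\left( b,Ta\right) \right\} $.

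The key step is the case analysis on $u\left( a,b\right) $, which simplifies considerably because $a$ and $b$ are fixed points. Indeed, $d\left( a,Ta\right) =d\left( a,a\right) =\theta $ and $d\left( b,Tb\right) =d\left( b,b\right) =\theta $, while $d\left( a,Tb\right) =d\left( a,b\right) $ and $d\left( b,Ta\right) =d\left( b,a\right) =d\left( a,b\right) $ by the symmetry axiom (Definition 2 (ii)). Thus $u\left( a,b\right) $ equals either $\theta $ or $d\left( a,b\right) $. In the first subcase the inequality forces $d\left( a,b\right) \preceq A^{\ast }\theta A=\theta $, so $d\left( a,b\right) =\theta $ and $a=b$. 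In the second subcase we obtain $d\left( a,b\right) \preceq A^{\ast }d\left( a,b\right) A$; combining $\theta \preceq d\left( a,b\right) $ from Definition 2 (i) with property (vii), $\left\Vert A^{\ast }\right\Vert =\left\Vert A\right\Vert $ from property (i), and submultiplicativity of the norm gives
\[
\left\Vert d\left( a,b\right) \right\Vert \leq \left\Vert A^{\ast }d\left( a,b\right) A\right\Vert \leq \left\Vert A\right\Vert ^{2}\left\Vert d\left( a,b\right) \right\Vert <\left\Vert d\left( a,b\right) \right\Vert ,
\]
which is impossible unless $d\left( a,b\right) =\theta $, so again $a=b$.

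Since $a$ and $b$ were arbitrary, every point of $C_{x_{1},\delta }^{C^{\ast }}$ coincides with every point of $C_{x_{0},r}^{C^{\ast }}$, forcing the two circles to coincide; therefore $C_{x_{0},r}^{C^{\ast }}$ is the unique fixed circle of $T$. I expect the only delicate point to be the bookkeeping of the five possibilities for $u\left( a,b\right) $, but as indicated this is tamed by the observation that $Ta=a$ and $Tb=b$ reduce each option to $\theta $ or $d\left( a,b\right) $; the final norm estimate is then identical to the one already employed in the proofs of Theorem 7 and Theorem 10.
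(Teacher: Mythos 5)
Your proposal is correct and follows essentially the same route as the paper: assume a second fixed circle, pick $a\in C_{x_{0},r}^{C^{\ast }}$ and $b\in C_{x_{1},\delta }^{C^{\ast }}$, use $Ta=a$, $Tb=b$ to reduce $u(a,b)$ to $\theta $ or $d(a,b)$, and derive $\left\Vert d(a,b)\right\Vert <\left\Vert d(a,b)\right\Vert $ via the norm estimate. Your explicit split into the subcases $u(a,b)=\theta $ and $u(a,b)=d(a,b)$ is in fact slightly cleaner than the paper's single chain of inequalities, which tacitly assumes $\left\Vert u(a,b)\right\Vert >0$ when invoking $\left\Vert A\right\Vert ^{2}\left\Vert u(a,b)\right\Vert <\left\Vert u(a,b)\right\Vert $.
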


\begin{proof}
Assume that $C_{x_{1},\delta }^{C^{\ast }}$ is another fixed circle of $T.$
Let $a$ and $b$ be any points in $C_{x_{0},r}^{C^{\ast }}$ and $%
C_{x_{1},\delta }^{C^{\ast }},$ respectively. Then, we get by (2.11)%
\begin{equation*}
d\left( a,b\right) =d\left( Ta,Tb\right) \preceq A^{\ast }u\left( a,b\right)
A,
\end{equation*}%
so that%
\begin{eqnarray*}
\left\Vert d\left( a,b\right) \right\Vert &\leq &\left\Vert A^{\ast }u\left(
a,b\right) A\right\Vert \\
&\leq &\left\Vert A\right\Vert ^{2}\left\Vert u\left( a,b\right) \right\Vert
\\
&<&\left\Vert u\left( a,b\right) \right\Vert \\
&\leq &\max \left\{ \left\Vert d\left( a,b\right) \right\Vert ,\left\Vert
d\left( a,Tb\right) \right\Vert ,\left\Vert d\left( b,Tb\right) \right\Vert
,\left\Vert d\left( a,Tb\right) \right\Vert ,\left\Vert d\left( b,Ta\right)
\right\Vert \right\} \\
&=&\left\Vert A\right\Vert ^{2}\max \left\{ \left\Vert d\left( a,b\right)
\right\Vert ,0\right\} \\
&=&\left\Vert d\left( a,b\right) \right\Vert
\end{eqnarray*}%
which is a contradiction. Hence $a=b.$ This implies that the self-mapping $T$
fixes only circle $C_{x_{0},r}^{C^{\ast }}.$
\end{proof}

\begin{remark}
1) The uniqueness result in Theorem 5 can be also stated using the
contraction conditions (2.10) given in Theorem 10 or (2.11) given in Theorem
11 instead of the contraction condition (2.9).
\end{remark}

\qquad \textit{2) The uniqueness result in Theorem 6 can be also stated
using the contraction conditions (2.9) given in Theorem 9 or (2.11) given in
Theorem 11 instead of the contraction condition (2.10).}

\qquad \textit{3) The uniqueness result in Theorem 7 can be also stated
using the contraction conditions (2.9) given in Theorem 9 or (2.10) given in
Theorem 10 instead of the contraction condition (2.11).}

\section{Conclusion and Future Works}

In this article, we discuss the existence and uniqueness of the fixed-circle
for self-mappings satisfying some special conditions on $C^{\ast }-$algebra
valued metric spaces. We also furnish some examples to show effectiveness of
our theoretical results. Using similar approaches and new contractive
conditions, it can be studied new fixed-circle results on $C^{\ast }-$%
algebra valued metric spaces. Since with this article we develop a new and
different perspective instead of the classical fixed point theory in $%
C^{\ast }-$algebra valued metric spaces, we hope that our results will
support researchers for future works and applications to other related areas.

\end{document}